\newtheorem{theorem}{Theorem}[section]
\newtheorem{definition}[theorem]{Definition}
\title{This is the title}
\begin{document}
\begin{center}
{\bf{DILATIONS OF LINEAR MAPS ON VECTOR SPACES}}\\
K. Mahesh Krishna and P. Sam Johnson\\
Department of Mathematical and Computational Sciences\\ 
National Institute of Technology Karnataka (NITK), Surathkal\\
Mangaluru 575 025, India  \\
Emails: kmaheshak@gmail.com, sam@nitk.edu.in\\

Date: \today
\end{center}

\hrule
\vspace{0.5cm}
\textbf{Abstract}: We continue the study  dilation of linear maps on vector spaces introduced by Bhat, De, and Rakshit. This notion is a variant of vector space dilation introduced by Han, Larson, Liu, and Liu. We derive vector space versions of  Wold decomposition, Halmos dilation, N-dilation, inter-twining lifting theorem and a variant of Ando dilation. It is noted further that unlike a kind of uniqueness of Halmos dilation of strict contractions on Hilbert spaces, vector space version of Halmos dilation can not be characterized.

\textbf{Keywords}:  Dilation, vector space, linear map.

\textbf{Mathematics Subject Classification (2020)}: 47A20, 15A03, 15A04.


\section{Introduction}
    Using functional calculus and Weierstrass polynomial approximation theorem, Halmos in 1950 proved an interesting result that every contraction on a Hilbert space can be lifted to unitary. 
\begin{theorem}\cite{HALMOS} (Halmos dilation)
Let $\mathcal{H}$ be a Hilbert space and $T:\mathcal{H}\to \mathcal{H}$ be a contraction. Then the operator 
\begin{align*}
U\coloneqq \begin{pmatrix}
T & \sqrt{I-TT^*}   \\
\sqrt{I-T^*T} & -T^*   \\
\end{pmatrix}
\end{align*}is unitary on 	$\mathcal{H}\oplus \mathcal{H}$. In other words,
\begin{align*}
T=P_\mathcal{H}U_\mathcal{H},
\end{align*}
 where $P_\mathcal{H}:\mathcal{H}\oplus \mathcal{H}\to \mathcal{H}\oplus \mathcal{H}$ is the orthogonal projection onto $\mathcal{H}$.
\end{theorem}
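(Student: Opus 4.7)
The plan is to verify directly that $U^*U=I$ and $UU^*=I$ on $\mathcal{H}\oplus\mathcal{H}$, since invertibility together with $U^*U=I$ (or the two identities together in infinite dimensions) is equivalent to unitarity. As $T$ is a contraction, both $I-TT^*$ and $I-T^*T$ are positive operators, so their unique positive square roots $D_{T^*}\coloneqq\sqrt{I-TT^*}$ and $D_T\coloneqq\sqrt{I-T^*T}$ exist and are self-adjoint; consequently
\[
U^*=\begin{pmatrix} T^* & D_T \\ D_{T^*} & -T\end{pmatrix}.
\]

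Next I would compute the $2\times 2$ block product $U^*U$. The two diagonal blocks become $T^*T+D_T^{\,2}=T^*T+(I-T^*T)=I$ and $D_{T^*}^{\,2}+TT^*=I$, while the off-diagonal blocks reduce to $T^*D_{T^*}-D_T T^*$ and its adjoint. The same computation for $UU^*$ produces $TD_T-D_{T^*}T$ off the diagonal. Thus the whole proof collapses to the two intertwining identities
\[
TD_T=D_{T^*}T \qquad\text{and}\qquad T^*D_{T^*}=D_T T^*,
\]
which are adjoints of each other, so only one needs to be established.

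The main obstacle is precisely this intertwining identity, and this is where I would use the functional-calculus/Weierstrass approximation idea alluded to in the introductory remarks before the theorem. The elementary computation $T(T^*T)^n=(TT^*)^nT$, proved by induction on $n$, extends by linearity to $T\,p(T^*T)=p(TT^*)T$ for every polynomial $p$. Since the spectra of $T^*T$ and $TT^*$ are contained in the compact interval $[0,\|T\|^2]$, the function $s\mapsto\sqrt{1-s}$ is the uniform limit on this interval of a sequence of polynomials $p_n$, and the corresponding operators $p_n(T^*T)$ and $p_n(TT^*)$ converge in norm to $D_T$ and $D_{T^*}$ respectively by continuity of the functional calculus. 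Passing to the limit in $T\,p_n(T^*T)=p_n(TT^*)\,T$ yields $TD_T=D_{T^*}T$, which completes the verification that $U^*U=UU^*=I$. Finally, the relation $T=P_{\mathcal{H}}U|_{\mathcal{H}}$ is immediate from reading off the $(1,1)$-entry of $U$ acting on vectors of the form $(x,0)$.
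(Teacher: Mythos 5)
Your proof is correct: the block computation reduces everything to the intertwining identity $TD_T=D_{T^*}T$, which you establish properly via $T(T^*T)^n=(TT^*)^nT$, Weierstrass approximation of $s\mapsto\sqrt{1-s}$ on $[0,1]$, and norm continuity of the continuous functional calculus. The paper itself gives no proof of this cited classical result, but the functional-calculus-plus-Weierstrass route you take is exactly the one the introduction attributes to Halmos, so your argument matches the intended approach.
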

Three years later, Sz. Nagy extended the result of Halmos which reads as follows.
\begin{theorem}\cite{NAGY}\label{NAGYTHEOREM} (Sz. Nagy dilation)
Let $\mathcal{H}$ be a Hilbert space and $T:\mathcal{H}\to \mathcal{H}$ be a contraction. Then there exists a Hilbert space $\mathcal{K}$	which contains $\mathcal{H}$ isometrically and a unitary $U:\mathcal{K}\to \mathcal{K}$ such that 
\begin{align*}
T^n=P_\mathcal{H}U_\mathcal{H}^n, \quad \forall n=1, 2,\dots, 
\end{align*}
where $P_\mathcal{H}:\mathcal{K}\to \mathcal{K}$  is the orthogonal projection onto $\mathcal{H}$.
\end{theorem}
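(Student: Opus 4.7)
The plan is to produce the dilation in two stages. First I would build an isometric power dilation $V$ of $T$ on a Hilbert space $\mathcal{K}_+$ that contains $\mathcal{H}$, and then extend $V$ to a unitary $U$ on a larger Hilbert space $\mathcal{K}\supseteq \mathcal{K}_+$ in such a way that $V^n=U^n|_{\mathcal{K}_+}$ for every $n\geq 0$. Since $T$ is a contraction, $I-T^*T\geq 0$ and the defect operator $D_T\coloneqq (I-T^*T)^{1/2}$ is well defined; set $\mathcal{D}_T\coloneqq \overline{D_T\mathcal{H}}$ and
\begin{align*}
\mathcal{K}_+\coloneqq \mathcal{H}\oplus \mathcal{D}_T\oplus \mathcal{D}_T\oplus \cdots,
\end{align*}
and define $V:\mathcal{K}_+\to \mathcal{K}_+$ by $V(h,d_1,d_2,\dots)\coloneqq (Th,D_Th,d_1,d_2,\dots)$.

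The main step is then to verify that this $V$ is an isometry and that it power-dilates $T$. The isometry identity reduces to $\|Th\|^2+\|D_Th\|^2=\|h\|^2$, which is immediate from $T^*T+D_T^2=I$. A short adjoint computation gives $V^*(k,e_1,e_2,\dots)=(T^*k+D_Te_1,e_2,e_3,\dots)$, so $V^*$ maps $\mathcal{H}$ (the first summand of $\mathcal{K}_+$) into itself; equivalently, $\mathcal{H}^\perp$ is $V$-invariant. With respect to $\mathcal{K}_+=\mathcal{H}\oplus \mathcal{H}^\perp$, the operator $V$ therefore has the block form
\begin{align*}
V=\begin{pmatrix} T & 0\\ * & *\end{pmatrix},
\end{align*}
from which $V^n$ inherits $T^n$ in its upper-left corner, yielding $P_\mathcal{H}V^n|_\mathcal{H}=T^n$ for all $n\geq 1$.

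The last step is to enlarge $\mathcal{K}_+$ so that the isometry $V$ becomes a unitary. Put $\mathcal{W}\coloneqq \mathcal{K}_+\ominus V\mathcal{K}_+$ and form
\begin{align*}
\mathcal{K}\coloneqq \cdots\oplus \mathcal{W}\oplus \mathcal{W}\oplus \mathcal{K}_+,
\end{align*}
with $U$ shifting the $\mathcal{W}$-coordinates one step to the right and sending the final pair $(w,x)\in \mathcal{W}\oplus \mathcal{K}_+$ to $w+Vx\in \mathcal{K}_+$. Using the orthogonal decomposition $\mathcal{K}_+=V\mathcal{K}_+\oplus \mathcal{W}$, one checks that $U$ is both injective and surjective and preserves norms, hence is unitary; by construction $U|_{\mathcal{K}_+}=V$, which combined with the previous step gives $T^n=P_\mathcal{H}U^n|_\mathcal{H}$ for all $n\geq 1$.

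Conceptually, the main obstacle is discovering the correct ansatz for the isometric dilation $V$; once it is written down, the isometry check and the power-dilation property (via the triangular block structure coming from the invariance of $\mathcal{H}^\perp$) are routine, and the final passage from isometry to unitary is a standard construction.
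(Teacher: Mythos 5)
Your proposal is a correct proof, but note that the paper itself offers no proof of this statement: it is quoted from the literature (\cite{NAGY}) as classical background, so the only thing to compare against is the construction the paper later imitates in its vector-space analogue, namely Sch\"{a}ffer's. Your route is the standard two-stage Sz.-Nagy--Foias argument: first the explicit isometric dilation $V(h,d_1,d_2,\dots)=(Th,D_Th,d_1,d_2,\dots)$ on $\mathcal{H}\oplus\mathcal{D}_T\oplus\mathcal{D}_T\oplus\cdots$, whose co-invariance of $\mathcal{H}$ (equivalently the lower-triangular block form) gives $P_\mathcal{H}V^n|_\mathcal{H}=T^n$, and then the generic extension of an isometry to a unitary by adjoining a backward string of copies of $\mathcal{W}=\mathcal{K}_+\ominus V\mathcal{K}_+$. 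All the steps check out: the isometry identity is exactly $T^*T+D_T^2=I$, the adjoint formula $V^*(k,e_1,e_2,\dots)=(T^*k+D_Te_1,e_2,\dots)$ does show $V^*\mathcal{H}\subseteq\mathcal{H}$, and the map $(w,x)\mapsto w+Vx$ is unitary from $\mathcal{W}\oplus\mathcal{K}_+$ onto $\mathcal{K}_+$ precisely because $\mathcal{K}_+=V\mathcal{K}_+\oplus\mathcal{W}$ orthogonally. By contrast, Sch\"{a}ffer's proof (alluded to before Theorem \ref{SCHAFFERVECTOR}) produces the unitary in one step as an explicit doubly-infinite operator matrix with $T$ in the $(0,0)$ slot and defect operators $\sqrt{I-T^*T}$, $\sqrt{I-TT^*}$, $-T^*$ placed so that unitarity can be verified by direct multiplication; that approach is more computational but transfers almost verbatim to the vector-space setting of this paper (where the defect operators degenerate to $I$ and $0$), which is why the paper follows it. Your decomposition approach buys conceptual clarity and the Wold-type structure of the minimal isometric dilation, at the cost of not being purely algebraic: it uses positivity, square roots, and orthogonal complements, none of which survive in the vector-space analogues the paper is after.
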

Following the Theorem \ref{NAGYTHEOREM}, extension of contractions on Hilbert spaces became an active area of research, known as  dilation theory. Some standard references for this theory are \cite{NAGYFOIAS, LEVYSHALIT, ARVESON, ORRGUIDED}. This study of contractions boosted the study of  other classes of operators not only on Hilbert spaces, but also on Banach spaces \cite{FACKLER, STROESCU, AKCOGLU}. In a recent paper \cite{BHATDERAKSHITH}, Bhat, De, and Rakshit abstracted the key ingradients in Halmos and Sz. Nagy dilation theorem and set up a set theoretic version of dilation theory. 
\begin{definition}\cite{BHATDERAKSHITH}\label{BHATSET}
Let $A$ be a (non empty)	set and $h:A\to A$ be a map. An injective power dilation of $h$ is a quadruple $(B, i, v,p)$, where $B$ is a set, $i:A\to B$,  $v:B\to B$ are injective maps,  $p:B\to B$ is an idempotent map such that $p(B)=i(A)$ and 
\begin{align*}
i(h^n(a))=p(v^n(i(a))), \quad \forall a \in A, \forall n \in \mathbb{Z}_+.
\end{align*}
A dilation $(B, i, v,p)$ of $h$ is said to be minimal if 
\begin{align*}
B=\bigcup\limits_{n=0}^\infty v^n(i(A)).
\end{align*}
\end{definition}
It is a simple observation that for Hilbert spaces, every operator cannot be dilated to unitary operator. However, the following is a surprising result for sets derived in \cite{BHATDERAKSHITH}.
\begin{theorem}\cite{BHATDERAKSHITH}
	Every map $h:A\to A$ admits a minimal injective power dilation.
\end{theorem}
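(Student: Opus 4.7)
The construction I would use is a purely combinatorial tower over $A$. Set $B \coloneqq A \times \mathbb{Z}_+$, and define
\begin{align*}
i : A \to B, \quad & i(a) \coloneqq (a, 0), \\
v : B \to B, \quad & v(a, n) \coloneqq (a, n+1), \\
p : B \to B, \quad & p(a, n) \coloneqq (h^n(a), 0).
\end{align*}
The key idea is to decouple $v$ from the dynamics of $h$: rather than letting $v$ carry the iteration of $h$ (which would fail to be injective whenever $h$ is not injective), I let $v$ be the trivial ``shift up one level'' and push all of the dynamical content into $p$.

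With this in place, the verification of Definition \ref{BHATSET} is a short bookkeeping exercise that I would carry out in the following order. First, $i$ and $v$ are manifestly injective since they are injective on each coordinate. Second, $p$ is idempotent because $p(p(a,n)) = p(h^n(a),0) = (h^0(h^n(a)),0) = (h^n(a),0) = p(a,n)$. Third, $p(B) = i(A)$ since every $p(a,n)$ lies in $A \times \{0\} = i(A)$, and conversely $i(a) = p(a,0)$. Fourth, the dilation identity follows immediately from
\begin{align*}
p(v^n(i(a))) = p(v^n(a,0)) = p(a,n) = (h^n(a),0) = i(h^n(a)).
\end{align*}
Finally, minimality is automatic: $v^n(i(A)) = A \times \{n\}$, so the union over $n \geq 0$ exhausts $B$.

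The one step that requires genuine thought, and which I would flag as the main conceptual obstacle, is choosing the right target space $B$ so that $v$ can be made injective regardless of how badly $h$ fails to be injective. The naive attempt $v(a,n) = (h(a), n+1)$ collapses on the fibres of $h$ and is not injective; the fix above is to make $v$ ``remember'' the original element $a$ and only apply $h$ at the moment of projection. Everything else in the proof is straightforward verification.
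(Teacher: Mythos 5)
Your construction is correct, and it is essentially the same approach as the paper's: the paper states this set-theoretic result without proof (citing Bhat, De, and Rakshit), but the construction it reproduces for the vector space analogue in Theorem \ref{STANDARDDILATION} --- the shift $U$ on finitely supported sequences together with the projection $P(x_n)_{n=0}^\infty=\sum_n IT^nx_n$ --- is precisely the linear version of your $B=A\times\mathbb{Z}_+$, $v(a,n)=(a,n+1)$, $p(a,n)=(h^n(a),0)$. All of your verifications (injectivity, idempotence, $p(B)=i(A)$, the dilation identity, and minimality) check out.
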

Bhat, De, and Rakshit suceeded in obtaining fundamental theorems of dilations such as Wold decomposition, Halmos dilation,  Sz. Nagy dilation, inter-twining lifting theorem, Sarason's lemma, Ando dilation and BCL (Berger, Coburn and Lebow) theorem. Definition \ref{BHATSET} allowed the authors of \cite{BHATDERAKSHITH} to introduce the dilation of linear maps on vector spaces and showed that every linear map admits a minimal injective power dilation (see Section \ref{SECTIONTWO} for definition).

In this paper, we give the abstract study of dilation initiated by Bhat, De, and Rakshit for vector spaces. We follow a similar development as done in \cite{BHATDERAKSHITH}. We first derive Wold decomposition, followed by Halmos dilation. After that we derive an N-dilation result which is motivated from the construction of Egervary. Followed by this, we derive inter-twining lifting theorem.  At present exact analogoue of Ando dilation is not known but  a variant of that is given.  
Before ending the introduction, we note that  there is another  vector space approach of dilation theory by Han, Larson, Liu, and Liu \cite{HANLARSONLIULIU} which is motivated from the Naimark dilation theorem \cite{CZAJA, NAIMARK1, NAIMARK2}, dilation theory of frames \cite{LARSONSZAFRANCISZEK, HANLARSONGROUP, HANLARSONOPERATOR, HAN2014, CASAZZAHANLARSON, KASHINKUKILOVA, HANJFA} and Stinespring dilation theorem \cite{STINESPRING}.

\section{Dilations of linear maps}\label{SECTIONTWO}
Let $\mathcal{H}$ be a Hilbert space. We recall that an operator  $T:\mathcal{H}\to \mathcal{H}$ is called  a shift if   $\cap _{n=0}^\infty T^n(\mathcal{H})=\{0\} $. Classical Wold decomposition is the following.	
\begin{theorem}\cite{NAGYFOIAS}
(Wold decomposition) Let $T$ be an isometry on a Hilbert space $\mathcal{H}$. Then  $\mathcal{H}$ decomposes uniquely as $\mathcal{H}=\mathcal{H}_u\oplus \mathcal{H}_s$, where $\mathcal{H}_u$ and $\mathcal{H}_s$ are   $T$-reducing subspaces of $\mathcal{H}$, $T_{|\mathcal{H}_u}:\mathcal{H}_u\to \mathcal{H}_u$ is a  unitary and $T_{|\mathcal{H}_s}:\mathcal{H}_s \to \mathcal{H}_s$ is a shift.
\end{theorem}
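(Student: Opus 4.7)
The plan is to run the classical wandering-subspace construction. Set $L \coloneqq \mathcal{H} \ominus T(\mathcal{H})$, which is well defined because $T$ is an isometry (hence $T(\mathcal{H})$ is closed). The first step is to verify that the subspaces $\{T^n(L)\}_{n=0}^\infty$ are pairwise orthogonal; this is immediate from the isometry identity $\langle T^m x, T^n y\rangle = \langle x, T^{n-m}y\rangle$ for $n \geq m$, together with the fact that $L \perp T(\mathcal{H}) \supseteq T^{n-m}(\mathcal{H})$ when $n>m$. Then I define
\begin{align*}
\mathcal{H}_s \coloneqq \bigoplus_{n=0}^{\infty} T^n(L), \qquad \mathcal{H}_u \coloneqq \mathcal{H} \ominus \mathcal{H}_s,
\end{align*}
so that $\mathcal{H} = \mathcal{H}_u \oplus \mathcal{H}_s$ by construction.

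The next step is to check that both summands reduce $T$. Invariance $T(\mathcal{H}_s) \subseteq \mathcal{H}_s$ is clear from the definition, and $T^*$-invariance follows because $T^*|_L = 0$ and $T^*T = I$ give $T^*(T^n(L)) = T^{n-1}(L)$ for $n \geq 1$ and $T^*(L) \subseteq \mathcal{H}_s$ (in fact $=0$). Hence $\mathcal{H}_u$ is also $T$- and $T^*$-invariant. To identify $\mathcal{H}_u$ concretely, I would prove the identity
\begin{align*}
\mathcal{H}_u = \bigcap_{n=0}^{\infty} T^n(\mathcal{H}),
\end{align*}
by showing inductively that $\mathcal{H} \ominus \bigoplus_{k=0}^{n-1} T^k(L) = T^n(\mathcal{H})$; this uses only the isometry property and the orthogonal decomposition $\mathcal{H} = L \oplus T(\mathcal{H})$.

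With this identification, $T|_{\mathcal{H}_u}$ is an isometry whose range equals $\bigcap_{n \geq 1} T^n(\mathcal{H}) = \mathcal{H}_u$, so it is surjective and thus unitary; and $T|_{\mathcal{H}_s}$ satisfies $\bigcap_{n=0}^\infty T^n(\mathcal{H}_s) = \mathcal{H}_s \cap \mathcal{H}_u = \{0\}$, so it is a shift. Finally, for uniqueness I would argue that any decomposition $\mathcal{H} = \mathcal{K}_u \oplus \mathcal{K}_s$ with the stated properties must satisfy $\mathcal{K}_u \subseteq \bigcap_n T^n(\mathcal{H})$ (since $T|_{\mathcal{K}_u}$ is unitary, hence $T^n(\mathcal{K}_u) = \mathcal{K}_u$ for all $n$) and $\mathcal{K}_s \subseteq \mathcal{H} \ominus \bigcap_n T^n(\mathcal{H})$ (since the shift condition forces $\mathcal{K}_s$ to be orthogonal to every $T$-unitary part), forcing equality with $\mathcal{H}_u$ and $\mathcal{H}_s$.

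The main obstacle is the inductive identification $\mathcal{H} \ominus \bigoplus_{k=0}^{n-1} T^k(L) = T^n(\mathcal{H})$: everything else is either a direct computation or a consequence of this description of $\mathcal{H}_u$. The induction step uses that $T$ is an isometry, so it preserves orthogonal complements after an appropriate shift, and it is here that failure of the analogous statement in a pure vector-space setting (absent inner products and closed-range considerations) is the reason the authors must develop a separate vector-space version in the sections to follow.
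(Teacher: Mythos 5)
The paper does not prove this statement: it is quoted verbatim as classical background from the cited reference (Sz.-Nagy--Foias) and used only as motivation for the vector-space analogue that follows, so there is no in-paper argument to compare against. Your proposal is the standard wandering-subspace proof and is essentially correct: the orthogonality of the spaces $T^n(L)$ with $L=\mathcal{H}\ominus T(\mathcal{H})$, the identification $\mathcal{H}\ominus\bigoplus_{k=0}^{n-1}T^k(L)=T^n(\mathcal{H})$ via the telescoping decomposition $\mathcal{H}=L\oplus T(\mathcal{H})$, and the resulting description $\mathcal{H}_u=\bigcap_n T^n(\mathcal{H})$ all go through as you indicate. The only step I would tighten is the uniqueness argument: the assertion that ``the shift condition forces $\mathcal{K}_s$ to be orthogonal to every $T$-unitary part'' is vague as stated; the clean route is to show the reverse inclusion $\bigcap_n T^n(\mathcal{H})\subseteq\mathcal{K}_u$ directly, by writing $y\in\bigcap_n T^n(\mathcal{H})$ as $y=u+s$ with $u\in\mathcal{K}_u$, $s\in\mathcal{K}_s$, using $T^n(\mathcal{H})=\mathcal{K}_u\oplus T^n(\mathcal{K}_s)$ and uniqueness of the orthogonal decomposition to conclude $s\in\bigcap_n T^n(\mathcal{K}_s)=\{0\}$. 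Your closing remark is also apt: it is precisely the failure of orthogonal complements and of the identity $\mathcal{H}=L\oplus T(\mathcal{H})$ in a bare vector space that forces the paper's vector-space version to settle for a non-unique complement and only a $T$-invariant (rather than reducing) bijective part.
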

We note that the definition of shift of an operator does not use the Hilbert space structure. Thus it can be formulated for vector spaces without modifications.
\begin{definition}
Let $\mathcal{V}$ be a  vector space and $T:\mathcal{V}\to  \mathcal{V}$ be a linear map. The map $T$ is said to be a shift if $\cap _{n=0}^\infty T^n(\mathcal{V})=\{0\} $.	
\end{definition}
We now have the vector space version of Wold decomposition.
\begin{theorem}(Wold decomposition for vector spaces)
Let $T$ be an injective linear map on a vector  space $\mathcal{V}$. Then  $\mathcal{V}$ decomposes  as $\mathcal{V}=\mathcal{V}_b\oplus \mathcal{V}_s$, where $\mathcal{V}_b$ is a  $T$-invariant subspace of $\mathcal{V}$, $T_{|\mathcal{V}_b}:\mathcal{V}_b\to \mathcal{V}_b$ is a bijection and $T_{|\mathcal{V}_s}:\mathcal{V}_s \to \mathcal{V}$ is a shift.	
\end{theorem}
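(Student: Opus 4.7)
The plan is to mimic the Hilbert-space Wold decomposition, replacing the orthogonal complement by a purely algebraic one. The candidate for the ``bijective part'' is the natural one,
\[
\mathcal{V}_b := \bigcap_{n=0}^\infty T^n(\mathcal{V}).
\]
Because the subspaces $T^n(\mathcal{V})$ form a nested decreasing chain, the inclusion $T(\mathcal{V}_b) \subseteq \mathcal{V}_b$ is immediate, so $\mathcal{V}_b$ is $T$-invariant.

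The main technical step will be showing that $T_{|\mathcal{V}_b}$ is a bijection of $\mathcal{V}_b$ onto itself. Injectivity is free from that of $T$; surjectivity is the one place where injectivity of $T$ is genuinely used. Given $x \in \mathcal{V}_b$, for each $n$ the element $x$ lies in $T^{n+1}(\mathcal{V})$, so there exists $y_n \in T^n(\mathcal{V})$ with $T(y_n) = x$; injectivity of $T$ forces all $y_n$ to coincide with a single $y$, which then lies in every $T^n(\mathcal{V})$, hence in $\mathcal{V}_b$.

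For the complementary summand, I would simply pick any algebraic complement $\mathcal{V}_s$ of $\mathcal{V}_b$ inside $\mathcal{V}$, whose existence is guaranteed by extending a Hamel basis of $\mathcal{V}_b$ to one of $\mathcal{V}$ (Zorn's lemma in the infinite-dimensional case). This immediately gives the direct sum decomposition $\mathcal{V} = \mathcal{V}_b \oplus \mathcal{V}_s$, with no further freedom to spend.

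Finally, to see that $T_{|\mathcal{V}_s}$ is a shift, I would verify $\bigcap_{n=0}^\infty T^n(\mathcal{V}_s) = \{0\}$. For any $x$ in this intersection, the case $n=0$ places $x$ in $\mathcal{V}_s$, while for each $n \geq 1$ writing $x = T^n(y_n)$ with $y_n \in \mathcal{V}_s \subseteq \mathcal{V}$ places $x$ in $T^n(\mathcal{V})$, so $x$ lies in $\mathcal{V}_b \cap \mathcal{V}_s = \{0\}$. The only nontrivial step is the surjectivity argument for $T_{|\mathcal{V}_b}$; the rest is bookkeeping. Note that unlike the Hilbert space setting there is no reason to expect $\mathcal{V}_s$ to be $T$-invariant, which is why the theorem phrases the shift part as a map $\mathcal{V}_s \to \mathcal{V}$ rather than $\mathcal{V}_s \to \mathcal{V}_s$, and explains the absence of uniqueness in the decomposition.
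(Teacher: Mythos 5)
Your proposal is correct and follows essentially the same route as the paper: the bijective part is $\mathcal{V}_b=\bigcap_{n=0}^\infty T^n(\mathcal{V})$, surjectivity of $T_{|\mathcal{V}_b}$ is obtained by using injectivity of $T$ to collapse the preimages $y_n$ into a single element of every $T^n(\mathcal{V})$, an arbitrary algebraic complement serves as $\mathcal{V}_s$, and the shift property follows from $\mathcal{V}_b\cap\mathcal{V}_s=\{0\}$. Your closing remark about the lack of $T$-invariance of $\mathcal{V}_s$ and the resulting non-uniqueness also matches the paper's own comment after the proof.
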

\begin{proof}
Define $\mathcal{V}_b\coloneqq \cap _{n=0}^\infty T^n(\mathcal{V})$ and let $\mathcal{V}_s$ be a vector space complement of $\mathcal{V}_b$ in $\mathcal{V}$. We clearly have 	$\mathcal{V}=\mathcal{V}_b\oplus \mathcal{V}_s$. Now $T(\mathcal{V}_b)=T (\cap _{n=0}^\infty T^n(\mathcal{V}))\subseteq \cap _{n=0}^\infty T^n(\mathcal{V})=\mathcal{V}_b$. Thus  $\mathcal{V}_b$ is a  $T$-invariant subspaces of $\mathcal{V}$. We now try to show  that $T_{|\mathcal{V}_b}$ is a bijection. Since $T$ is already injective, it suffices to show that $T_{|\mathcal{V}_b}$ is surjective. Let $y\in \mathcal{V}_b$. Then there exists a sequence $\{x_n\}_{n=1}^\infty$ in $\mathcal{V}$ such that $y=Tx_1=T^2x_2=T^3x_3=\cdots .$ Since $T$ is injective we then have $x_1=Tx_2=T^2x_2=\cdots $. Therefore  $y=Tx_1$ and $x_1\in \mathcal{V}_b$. Thus  $T_{|\mathcal{V}_b}$ is surjective. We are now left with proving that $T_{|\mathcal{V}_s}$ is a shift. Let $y\in \cap _{n=0}^\infty (T_{|\mathcal{V}_s})^n(\mathcal{V}_s)\subseteq (\cap _{n=0}^\infty T^n(\mathcal{V}))\cap \mathcal{V}_s= \mathcal{V}_b\cap \mathcal{V}_s$.  Hence $y=0$ which completes the proof. 
\end{proof}
Since vector space complements are not unique, note that, we do not have uniqueness in Wold decomposition for vector spaces. We now derive Halmos dilation for linear maps on vector spaces.
\begin{theorem}(Halmos dilation for vector spaces)
Let $\mathcal{V}$ be a vector space  and 	$T: \mathcal{V} \to \mathcal{V}$ be a linear map. Then the operator 
\begin{align*}
U\coloneqq \begin{pmatrix}
T & I   \\
I & 0  \\
\end{pmatrix}
\end{align*}
is invertible  on 	$\mathcal{V}\oplus \mathcal{V}$. In other words,
\begin{align*}
T=P_\mathcal{V}U_\mathcal{V},
\end{align*}
where $P_\mathcal{V}:\mathcal{V}\oplus \mathcal{V}\to \mathcal{V}\oplus \mathcal{V}$ is the first coordinate  projection onto $\mathcal{V}$.
\end{theorem}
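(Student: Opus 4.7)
The plan is to prove the theorem in two short steps: first exhibit an explicit two-sided inverse for $U$ by solving a $2\times 2$ block system, and then verify the compression identity $T = P_\mathcal{V} U_\mathcal{V}$ by direct evaluation on vectors of the form $(x,0)$.

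For the first step, I would write out the action of $U$ on an arbitrary element as $U\begin{pmatrix} x \\ y \end{pmatrix} = \begin{pmatrix} Tx + y \\ x \end{pmatrix}$, and then, given $(a,b) \in \mathcal{V}\oplus\mathcal{V}$, solve the system $Tx+y=a$, $x=b$. The unique solution $x=b$, $y=a-Tb$ motivates the candidate inverse
\[
V \coloneqq \begin{pmatrix} 0 & I \\ I & -T \end{pmatrix}.
\]
I would then confirm $UV = VU = I_{\mathcal{V}\oplus\mathcal{V}}$ by a direct block-matrix multiplication; both products collapse to $\begin{pmatrix} I & 0 \\ 0 & I \end{pmatrix}$ after a single cancellation between the $T$ and $-T$ entries. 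This computation makes no use of inner products, adjoints, or norms, so it goes through over an arbitrary vector space.

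For the second step, I would interpret $U_\mathcal{V}$ as the restriction of $U$ to the subspace $\{(x,0) : x \in \mathcal{V}\}$ canonically identified with $\mathcal{V}$, and simply compute
\[
P_\mathcal{V}\, U\begin{pmatrix} x \\ 0 \end{pmatrix} = P_\mathcal{V}\begin{pmatrix} Tx \\ x \end{pmatrix} = Tx,
\]
giving the required identity $T = P_\mathcal{V} U_\mathcal{V}$.

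There is really no serious obstacle to this proof: the classical Halmos construction needs the defect operators $\sqrt{I-TT^*}$ and $\sqrt{I-T^*T}$ only to ensure that $U$ is an \emph{isometry} preserving the inner product, which in turn forces a contractivity hypothesis on $T$. Once the target structure is relaxed from unitarity to mere invertibility of a linear map, the defect terms may be replaced by identity maps, and no restriction on $T$ is required. The one point that deserves a sentence is the convention that $U_\mathcal{V}$ denotes the restriction of $U$ to $\mathcal{V}\hookrightarrow\mathcal{V}\oplus\mathcal{V}$ in the first coordinate, paralleling the notation used in the Halmos dilation stated in the introduction.
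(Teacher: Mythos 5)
Your proposal is correct and follows essentially the same route as the paper: both exhibit the explicit inverse $V=\begin{pmatrix} 0 & I \\ I & -T \end{pmatrix}$ and verify $UV=VU=I$ by block multiplication. Your additional derivation of $V$ by solving the linear system and the explicit check of $T=P_\mathcal{V}U_\mathcal{V}$ are fine elaborations of the same argument.
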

\begin{proof}
It suffices to produce inverse map for $U$. A direct calculation says that 	
\begin{align*}
V\coloneqq \begin{pmatrix}
0 & I   \\
I & -T  \\
\end{pmatrix}
\end{align*}
is the inverse of $U$.
\end{proof}
In the sequel, any invertible operator of the form 
\begin{align*}
 \begin{pmatrix}
T & B   \\
C & D  \\
\end{pmatrix},
\end{align*}
 where  $B,C,D:\mathcal{V} \to \mathcal{V}$ are linear operators, will be called as a Halmos dilation of $T$.
Now we observe that Halmos dilation is not unique. Using the theory of block matrices \cite{LUSHIOU} we can produce a variety of Halmos dilations for a given operator. Following are some classes of Halmos dilations.
\begin{enumerate}[\upshape(i)]
	\item If $T: \mathcal{V} \to \mathcal{V}$ is an invertible linear map and the linear operators $B,C,D:\mathcal{V} \to \mathcal{V}$ are such that $D-CT^{-1}B$ is invertible, then 
	the operator 
	\begin{align*}
	U\coloneqq \begin{pmatrix}
	T & B  \\
	C & D  \\
	\end{pmatrix}
	\text{ is a Halmos dilation of $T$ on $\mathcal{V}\oplus \mathcal{V}$ whose inverse is }
	\end{align*}
	\begin{align*}
	 \begin{pmatrix}
	T^{-1} +T^{-1}B(D-CT^{-1}B)^{-1}& -T^{-1}B(D-CT^{-1}B)^{-1}   \\
	-(D-CT^{-1}B)^{-1}CT^{-1} & (D-CT^{-1}B)^{-1}  \\
	\end{pmatrix}.
	\end{align*}
	\item $D: \mathcal{V} \to \mathcal{V}$ is an invertible linear map and the linear operators $B,C:\mathcal{V} \to \mathcal{V}$ are such that $T-BD^{-1}C$ is invertible, then 
	the operator 
	\begin{align*}
	 \begin{pmatrix}
	T & B  \\
	C & D  \\
	\end{pmatrix}
	\text{ is a Halmos dilation of $T$ on $\mathcal{V}\oplus \mathcal{V}$ whose inverse is }
	\end{align*}
	\begin{align*}
	\begin{pmatrix}
	(T-BD^{-1}C)^{-1} & -(T-BD^{-1}C)^{-1}BD^{-1}   \\
	-D^{-1}C(T-BD^{-1}C)^{-1} & D^{-1}+D^{-1}C(T-BD^{-1}C)^{-1}BD^{-1}  \\
	\end{pmatrix}.
	\end{align*}
	\item $B: \mathcal{V} \to \mathcal{V}$ is an invertible linear map and the linear operators $C,D:\mathcal{V} \to \mathcal{V}$ are such that $C-DB^{-1}T$ is invertible, then 
	the operator 
	\begin{align*}
	\begin{pmatrix}
	T & B  \\
	C & D  \\
	\end{pmatrix}
	\text{ is a Halmos dilation of $T$ on $\mathcal{V}\oplus \mathcal{V}$ whose inverse is }
	\end{align*}
	\begin{align*}
\begin{pmatrix}
-(C-DB^{-1}T)^{-1}DB^{-1} &  (C-DB^{-1}T)^{-1}  \\
B^{-1}+B^{-1}T(C-DB^{-1}T)^{-1}DB^{-1} & -B^{-1}T(C-DB^{-1}T)^{-1} \\
\end{pmatrix}.
\end{align*}	
	\item $C: \mathcal{V} \to \mathcal{V}$ is an invertible linear map and the linear operators $B,D:\mathcal{V} \to \mathcal{V}$ are such that $B-TC^{-1}D$ is invertible, then 
	the operator 
	\begin{align*}
	\begin{pmatrix}
	T & B  \\
	C & D  \\
	\end{pmatrix}
	\text{ is a Halmos dilation of $T$ on $\mathcal{V}\oplus \mathcal{V}$ whose inverse is }
	\end{align*}
		\begin{align*}
	\begin{pmatrix}
-C^{-1}D	(B-TC^{-1}D)^{-1} &  C^{-1}+C^{-1}D(B-TC^{-1}D)^{-1}TC^{-1}  \\
	(B-TC^{-1}D)^{-1} &  -(B-TC^{-1}D)^{-1}TC^{-1}\\
	\end{pmatrix}.
	\end{align*}
\end{enumerate}
Recently, Bhat and Mukherjee \cite{BHATMUKHERJEE} proved that there is certain kind of uniqueness of Halmos dilation for strict contractions in Hilbert spaces. Result reads as follows.
\begin{theorem}\cite{BHATMUKHERJEE}\label{BM}
Let $\mathcal{H}$ be a finite dimensional Hilbert space and $T:\mathcal{H}\to \mathcal{H}$ be a strict contraction.	Then Halmos dilation of $T$ on $\mathcal{H}\oplus \mathcal{H}$ is unitarily equivalent to 
\begin{align*}
\begin{pmatrix}
T & -\sqrt{I-TT^*}W   \\
\sqrt{I-T^*T} & T^*W  \\
\end{pmatrix}, \quad \text{ for some unitary operator } W:\mathcal{H}\to \mathcal{H}.
\end{align*}
\end{theorem}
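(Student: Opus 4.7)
The plan is to start with an arbitrary Halmos dilation $U = \begin{pmatrix} T & B \\ C & D \end{pmatrix}$ of $T$ on $\mathcal{H}\oplus\mathcal{H}$, which by definition is unitary, and extract from $U^*U = I$ and $UU^* = I$ the four block relations $C^*C = I - T^*T$, $BB^* = I - TT^*$, $T^*B + C^*D = 0$, and $TC^* + BD^* = 0$. The first two identify the ``sizes'' of $C$ and $B$ up to a unitary factor, the third will determine $D$, and the fourth will turn out to be automatic from the preceding three.

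Next I would exploit the strict contraction hypothesis together with finite dimensionality: both $I - T^*T$ and $I - TT^*$ are strictly positive, hence invertible, which forces $C$ and $B$ to be invertible as well. Polar decomposition then yields unitaries $V, S$ on $\mathcal{H}$ with $C = V\sqrt{I - T^*T}$ and $B = \sqrt{I - TT^*}\, S$. Substituting these into $T^*B + C^*D = 0$ and invoking the intertwining identity $T^*\sqrt{I - TT^*} = \sqrt{I - T^*T}\, T^*$ (which follows by functional calculus from the elementary commutation $T(I - T^*T) = (I - TT^*)T$, approximating the square root by polynomials), I can cancel the invertible factor $\sqrt{I - T^*T}$ on the left and solve $D = -VT^*S$.

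At this point $U$ is completely parametrized by $T$ and the two unitaries $V, S$. To reach the canonical form in the statement, I conjugate by the block-diagonal unitary $\Phi \coloneqq \begin{pmatrix} I & 0 \\ 0 & V^* \end{pmatrix}$; a short block computation yields
\[
\Phi U \Phi^* = \begin{pmatrix} T & \sqrt{I - TT^*}\, SV \\ \sqrt{I - T^*T} & -T^*SV \end{pmatrix},
\]
and the change of variable $W \coloneqq -SV$ (still unitary) produces exactly the form displayed in the theorem, with $-\sqrt{I - TT^*}\,W$ in the $(1,2)$ entry and $T^*W$ in the $(2,2)$ entry.

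The main obstacle I anticipate is the intertwining step combined with a careful use of polar decomposition: one must explicitly verify that invertibility of $B$ and $C$ (guaranteed by the strict contractivity of $T$ in finite dimensions) upgrades the partial isometries in their polar decompositions to genuine unitaries, and one must justify that $T(I - T^*T) = (I - TT^*)T$ transfers under the square root. Once these two points are handled, the remaining argument is just bookkeeping with $2\times 2$ block matrices.
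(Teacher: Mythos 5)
The paper does not actually prove this statement: Theorem \ref{BM} is quoted from Bhat and Mukherjee \cite{BHATMUKHERJEE} purely as motivation for the subsequent negative result about vector spaces, so there is no in-paper argument to compare against. Judged on its own, your proof is correct, and it is the standard (essentially the original) argument. The block relations $C^*C=I-T^*T$, $BB^*=I-TT^*$, $T^*B+C^*D=0$ extracted from $U^*U=UU^*=I$ are right; strict contractivity plus finite dimensionality does make $I-T^*T$ and $I-TT^*$ strictly positive, hence $B$ and $C$ invertible, so the partial isometries in the polar decompositions $C=V\sqrt{I-T^*T}$ and $B=\sqrt{I-TT^*}\,S$ are genuinely unitary; the intertwining $T^*\sqrt{I-TT^*}=\sqrt{I-T^*T}\,T^*$ follows by uniform polynomial approximation of $\sqrt{x}$ on $[0,1]$ applied to $T(I-T^*T)=(I-TT^*)T$; cancelling the invertible factor $\sqrt{I-T^*T}$ then gives $D=-VT^*S$, and conjugation by $I\oplus V^*$ followed by the substitution $W=-SV$ lands exactly on the displayed canonical form. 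The two points you flag as potential obstacles are indeed the only nontrivial ones, and both are handled correctly; as a sanity check, the remaining unitarity relations $B^*B+D^*D=I$ and $CC^*+DD^*=I$ are automatically satisfied by the parametrization you obtain, confirming consistency.
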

We next derive a negative result to Theorem \ref{BM} for Halmos dilation in vector spaces.
\begin{theorem}
Let $\mathcal{V}$ be a finite dimensional vector space and $T:\mathcal{V} \to \mathcal{V}$ be a linear operator. Then there are Halmos dilations of $T$ which are not similar. 	
\end{theorem}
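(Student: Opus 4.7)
The strategy is to write down two concrete Halmos dilations of $T$ on $\mathcal{V}\oplus\mathcal{V}$ and distinguish them by a similarity invariant. The first is the dilation already exhibited, namely
\[
U_1\coloneqq \begin{pmatrix} T & I \\ I & 0 \end{pmatrix},
\]
whose inverse was computed in the previous proof. For the second I take
\[
U_2\coloneqq \begin{pmatrix} T & I \\ -I & 0 \end{pmatrix},
\]
and a direct multiplication shows that $\begin{pmatrix} 0 & -I \\ I & T \end{pmatrix}$ is its two-sided inverse, so $U_2$ is also a Halmos dilation of $T$ in the sense declared after the previous theorem.

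To show that $U_1$ and $U_2$ are not similar I pass to the trace of the square, which is a similarity invariant of operators on a finite-dimensional space. A one-line block multiplication gives
\[
U_1^2=\begin{pmatrix} T^2+I & T \\ T & I \end{pmatrix},\qquad U_2^2=\begin{pmatrix} T^2-I & T \\ -T & -I \end{pmatrix},
\]
so $\operatorname{tr}(U_1^2)-\operatorname{tr}(U_2^2)=4\dim\mathcal{V}\ne 0$ over the scalar fields used throughout the paper (and whenever $\dim\mathcal{V}\ge 1$ in characteristic zero). Since similar operators have equal traces of all powers, $U_1$ and $U_2$ cannot be conjugate by any invertible operator on $\mathcal{V}\oplus\mathcal{V}$, which is exactly the assertion.

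The argument is essentially computational and carries no genuine obstacle; the only subtlety is choosing an invariant that actually separates the two candidates. Observe that $U_1$ and $U_2$ have the same trace $\operatorname{tr}(T)$ and, for even $\dim\mathcal{V}$, the same determinant $1$, so neither of the cheapest invariants suffices. One has to climb to $\operatorname{tr}(U^2)$ — equivalently, the second Newton power-sum of the eigenvalues — to detect the sign flip in the off-diagonal blocks, and this is what makes the sign choice $B=I$, $C=-I$ the right one to contrast with $B=C=I$.
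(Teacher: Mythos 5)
Your proof is correct, and it takes a genuinely different route from the paper's. The paper contrasts the canonical dilation $\left(\begin{smallmatrix} T & I \\ I & 0 \end{smallmatrix}\right)$ with $\left(\begin{smallmatrix} T & T-I \\ T+I & T \end{smallmatrix}\right)$ and appeals to the trace; those two operators have traces $\operatorname{tr}(T)$ and $2\operatorname{tr}(T)$, so the paper's invariant only separates them when $\operatorname{tr}(T)\neq 0$ (for a traceless or nilpotent $T$ the argument as written does not immediately conclude, and one would have to pass to higher power sums anyway). Your choice of $U_2=\left(\begin{smallmatrix} T & I \\ -I & 0 \end{smallmatrix}\right)$, whose inverse you verify correctly, together with the invariant $\operatorname{tr}(U^2)$, yields a separation $\operatorname{tr}(U_1^2)-\operatorname{tr}(U_2^2)=4\dim\mathcal{V}$ that is independent of $T$, so your argument works uniformly for every linear operator $T$ on a nonzero finite-dimensional space over a field of characteristic zero (or any characteristic not dividing $4\dim\mathcal{V}$), which is a strictly more robust version of the same trace-invariant idea. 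Your closing observation that the first-order invariants $\operatorname{tr}$ and $\det$ fail to distinguish your pair is accurate and explains why the climb to $\operatorname{tr}(U^2)$ is necessary for your choice; the trade-off is that the paper's pair is separated by the plain trace whenever that happens to be nonzero, while yours requires one more matrix multiplication but no hypothesis on $T$.
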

\begin{proof}
Note that 	\begin{align*}
 \begin{pmatrix}
T & T-I   \\
T+I & T \\
\end{pmatrix}
\end{align*}
is an invertible operator and hence is a Halmos dilation of $T$. It is now enough to show that the matrices 
	\begin{align*}
\begin{pmatrix}
T & T-I   \\
T+I & T \\
\end{pmatrix} \quad \text{ and } \quad \begin{pmatrix}
T & I   \\
I & 0 \\
\end{pmatrix} 
\end{align*} 
are not similar. Since $\mathcal{V}$ is finite dimensional, we can use the property of trace map to conclude that these matrices are not similar.
\end{proof}
It was Egervary \cite{EGERVARY} who observed that Halmos dilation of contraction can be extended finitely so that power of dilation will be dilation of power of contraction. This can be formally stated as follows.
\begin{theorem}\cite{EGERVARY} \label{EGERVARY}(N-dilation)
Let $\mathcal{H}$ be a Hilbert space and $T:\mathcal{H}\to \mathcal{H}$ be a contraction. Let $N$ be a natural number. Then the operator 
\begin{align*}
U\coloneqq \begin{pmatrix}
T & 0& 0 & \cdots &0 & \sqrt{I-TT^*}   \\
\sqrt{I-T^*T} & 0& 0 & \cdots &0& -T^*   \\
0&I&0&\cdots &0& 0\\
0&0&I&\cdots &0 & 0\\
\vdots &\vdots &\vdots & & \vdots &\vdots \\
0&0&0&\cdots &0 & 0\\
0&0&0&\cdots &I & 0\\
\end{pmatrix}_{(N+1)\times (N+1)}
\end{align*}is unitary on 	$\oplus_{k=1}^{N+1} \mathcal{H}$ and 
\begin{align*}
T^k=P_\mathcal{H}U_\mathcal{H}^k,\quad \forall k=1, \dots, N,
\end{align*}
where $P_\mathcal{H}:\oplus_{k=1}^{N+1} \mathcal{H}\to \oplus_{k=1}^{N+1} \mathcal{H}$ is the orthogonal projection onto $\mathcal{H}$. 		
\end{theorem}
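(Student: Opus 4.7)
The plan is to verify two things directly from the block structure of $U$: first, that $U$ is unitary on $\oplus_{k=1}^{N+1}\mathcal{H}$, and second, that the power-dilation identity $T^k = P_\mathcal{H} U^k|_\mathcal{H}$ holds for $k=1,\dots,N$. Writing $D_T := \sqrt{I-T^*T}$ and $D_{T^*} := \sqrt{I-TT^*}$, the computation will rely on the defect-operator identities $D_T^2 = I-T^*T$ and $D_{T^*}^2 = I-TT^*$, together with the intertwining relations $TD_T = D_{T^*}T$ and $T^*D_{T^*} = D_T T^*$, both of which follow from $T(I-T^*T) = (I-TT^*)T$ via the functional calculus for $\sqrt{\cdot}$.

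For unitarity I would check column-orthonormality of $U$ block-by-block. The first column of $U$ has squared norm $T^*T + D_T^2 = I$, and the last column has squared norm $D_{T^*}^2 + TT^* = I$; each of the middle columns is a standard basis block with a single $I$ in one slot, hence orthonormal by inspection. The only non-trivial cross inner product is between the first and last columns, which equals $T^*D_{T^*} - D_T T^*$ and vanishes by the intertwining identity. This yields $U^*U = I$. The row-orthonormality computation $UU^* = I$ is entirely analogous, with $TD_T = D_{T^*}T$ doing the work in the cross-term.

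For the dilation property I would read off from the action of $U$ that $U(h,0,\dots,0) = (Th, D_T h, 0,\dots,0)$ for $h\in\mathcal{H}$, and then prove by induction on $k$ that for $1 \le k \le N$,
\begin{align*}
U^k(h,0,\dots,0) = (T^k h,\ D_T T^{k-1} h,\ \dots,\ D_T T h,\ D_T h,\ 0,\dots,0),
\end{align*}
where the non-zero entries occupy exactly positions $1$ through $k+1$. The inductive step works because, so long as coordinate $N+1$ remains zero at step $k-1 \le N-1$, the block $U$ acts simply by ``shift the middle entries down one slot, put $T^k h$ on top, and install $D_T T^{k-1} h$ in the second slot''; applying $P_\mathcal{H}$ then recovers $T^k h$.

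The main subtlety — and the reason the conclusion is capped at $k = N$ rather than all $k$ — is the bookkeeping of the last coordinate: at $k = N$ the cascade has just filled position $N+1$ with $D_T h$, so one further application of $U$ would pick up a non-trivial $D_{T^*} D_T h$ contribution in the first slot, destroying the clean identity. Beyond this careful block-matrix accounting, I expect no genuine algebraic obstacle; the proof reduces to a routine verification once the defect-operator identities are in hand.
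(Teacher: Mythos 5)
The paper states this theorem only as a cited classical result of Egerv\'ary and supplies no proof of its own, so there is no in-paper argument to compare against; the nearest analogue is the proof of Theorem \ref{NDILATIONVECTOR}, where invertibility is shown by exhibiting an explicit inverse and the power identity by direct computation of $U^k$. Your verification is correct and is the standard one: writing $D_T=\sqrt{I-T^*T}$ and $D_{T^*}=\sqrt{I-TT^*}$, the column and row orthonormality checks reduce exactly to $D_T^2=I-T^*T$, $D_{T^*}^2=I-TT^*$ and the intertwining relation $TD_T=D_{T^*}T$ together with its adjoint $D_TT^*=T^*D_{T^*}$, and your inductive formula $U^k(h,0,\dots,0)=(T^kh,\,D_TT^{k-1}h,\dots,D_Th,\,0,\dots,0)$ for $1\le k\le N$ is right, including the correct diagnosis that the identity fails at $k=N+1$ once the last slot is occupied and the $D_{T^*}D_Th$ term re-enters the first coordinate.
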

  We now derive vector space version of Theorem \ref{EGERVARY}.
  \begin{theorem}(N-dilation for vector spaces)\label{NDILATIONVECTOR}
Let $\mathcal{V}$ be a vector space  and 	$T: \mathcal{V} \to \mathcal{V}$ be a linear map. Let $N$ be a natural number. Then the operator 
 \begin{align*}
 U\coloneqq \begin{pmatrix}
 T & 0& 0 & \cdots &0 & I   \\
 I & 0& 0 & \cdots &0& 0   \\
 0&I&0&\cdots &0& 0\\
 0&0&I&\cdots &0 & 0\\
 \vdots &\vdots &\vdots & & \vdots &\vdots \\
 0&0&0&\cdots &0 & 0\\
 0&0&0&\cdots &I & 0\\
 \end{pmatrix}_{(N+1)\times (N+1)}
 \end{align*}is invertible  on 	$\oplus_{k=1}^{N+1} \mathcal{V}$ and 
 \begin{align}\label{FINITEDILATIONEQUATION}
 T^k=P_\mathcal{V}U_\mathcal{V}^k,\quad \forall k=1, \dots, N,
 \end{align}
 where $P_\mathcal{V}:\oplus_{k=1}^{N+1} \mathcal{V}\to \oplus_{k=1}^{N+1} \mathcal{V}$ is the first coordinate  projection onto $\mathcal{V}$.
  \end{theorem}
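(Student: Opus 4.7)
The plan is to attack both claims by direct computation, mirroring Egervary's classical construction but stripped of all reference to norms or adjoints. First I would unpack the action of $U$: reading off the block matrix, one sees that
\begin{align*}
U(v_1, v_2, \ldots, v_{N+1})^T = (Tv_1 + v_{N+1},\, v_1,\, v_2,\, \ldots,\, v_N)^T,
\end{align*}
so $U$ performs a downward cyclic shift on the last $N$ coordinates, feeds the bottom coordinate back into the top entry, and applies $T$ on top of the old first coordinate.

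For invertibility I would exhibit an explicit two-sided inverse. Solving $U\mathbf{v} = \mathbf{w}$ yields uniquely $v_k = w_{k+1}$ for $1 \le k \le N$ and $v_{N+1} = w_1 - T w_2$, which defines a block matrix $V$ whose only non-zero blocks are $I$ on the superdiagonal together with $I$ in position $(N+1,1)$ and $-T$ in position $(N+1,2)$. A formal verification then gives $UV = VU = I$, so $U$ is invertible on $\oplus_{k=1}^{N+1}\mathcal{V}$.

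For the dilation identity \eqref{FINITEDILATIONEQUATION} I would prove by induction on $k$ the stronger statement
\begin{align*}
U^k(v, 0, \ldots, 0)^T = (T^k v,\, T^{k-1} v,\, \ldots,\, Tv,\, v,\, 0,\, \ldots,\, 0)^T, \qquad 1 \le k \le N,
\end{align*}
with the non-zero entries occupying positions $1$ through $k+1$. The base case $k=1$ is immediate from the formula for $U$, and the step $k \mapsto k+1$ follows by applying $U$ to the inductive form and using that the last coordinate of $U^k(v,0,\ldots,0)^T$ is still zero whenever $k \le N-1$; this is precisely the hypothesis that prevents the wraparound $I$ in the top-right of $U$ from re-injecting the initial vector and spoiling the pattern. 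Projecting onto the first coordinate then yields $T^k = P_\mathcal{V} U^k_{|\mathcal{V}}$. The argument presents no real conceptual obstacle; the only bookkeeping subtlety — tracking which coordinates remain zero — is also exactly what forces the restriction $k \le N$ in the statement, since for $k = N+1$ the wraparound term contributes an extra $v$ to the top coordinate and breaks the identity.
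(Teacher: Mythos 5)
Your proposal is correct and follows essentially the same route as the paper: the explicit inverse $V$ you derive by solving $U\mathbf{v}=\mathbf{w}$ (identity blocks on the superdiagonal, $I$ in position $(N+1,1)$ and $-T$ in position $(N+1,2)$) is exactly the matrix the paper writes down, and the dilation identity is obtained by computing powers of $U$, which the paper leaves as ``a direct calculation'' and you make explicit via induction. Your remark on why the identity fails at $k=N+1$ matches the paper's observation following the theorem.
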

\begin{proof}
A direct calculation of power of $U$ gives Equation (\ref{FINITEDILATIONEQUATION}). To complete the proof, now we need show that $U$ is invertible. Define
 \begin{align*}
V\coloneqq \begin{pmatrix}
0 & I& 0& 0 &  \cdots &0 & 0   \\
0 & 0& I& 0 &  \cdots &0& 0   \\
0&0&0& I&\cdots &0& 0\\
0&0&0& 0&\cdots &0 & 0\\
\vdots &\vdots  &\vdots & \vdots& & \vdots &\vdots \\
0&0&0& 0&\cdots &0 & I\\
I&-T&0& 0&\cdots &0 & 0\\
\end{pmatrix}_{(N+1)\times (N+1)}
\end{align*}
Then $UV=VU=I$. Thus $V$ is the inverse of $U$.	
\end{proof}
It was Schaffer \cite{SCHAFFER} who gave a proof of Sz. Nagy dilation theorem using infinite matrices. We now obtain a similar  result for vector spaces. In the following theorem, $\oplus_{n=-\infty}^{\infty} \mathcal{V}$ is the vector space defined by 
\begin{align*}
\oplus_{n=-\infty}^{\infty} \mathcal{V}\coloneqq \{ \{x_n\}_{n=-\infty}^\infty, x_n \in \mathcal{V}, \forall n \in \mathbb{Z}, x_n\neq 0 
\text{ only for finitely many } n's\}
\end{align*}
with respect to  natural operations.
\begin{theorem}\label{SCHAFFERVECTOR}
Let $\mathcal{V}$ be a vector space  and 	$T: \mathcal{V} \to \mathcal{V}$ be a linear map.  Let $U\coloneqq(u_{n,m})_{-\infty \leq n,m\leq \infty}$ be the operator defined on 
$\oplus_{n=-\infty}^{\infty} \mathcal{V}$ given by  the infinite matrix defined as follows:
\begin{align*}
u_{0,0}\coloneqq T, \quad u_{n,n+1}\coloneqq I, \quad \forall n \in \mathbb{Z},  \quad u_{n,m}\coloneqq 0 \quad  \text{ otherwise},
\end{align*}
i.e, 
\begin{align*}
U=\begin{pmatrix}
 &\vdots &\vdots & \vdots & \vdots & \vdots & \\
\cdots & 0 & I& 0 & 0&  0& \cdots & \\
\cdots & 0 & 0& I & 0& 0&\cdots  & \\
\cdots & 0&0&\underline{T}&I& 0&\cdots&\\
\cdots & 0&0&0&0& I&\cdots &\\
\cdots & 0&0&0&0& 0&\cdots &\\
 & \vdots &\vdots &\vdots &\vdots  & \vdots & \\
\end{pmatrix}_{\infty\times \infty}
\end{align*}
where $T$ is in the $(0,0)$  position (which is underlined), is invertible  on 	$\oplus_{n=-\infty}^{\infty} \mathcal{V}$ and 
\begin{align}\label{INFINITEDILATIONEQUATION}
T^n=P_\mathcal{V}U_\mathcal{V}^n,\quad \forall n\in \mathbb{N},
\end{align}
where $P_\mathcal{V}:\oplus_{n=-\infty}^{\infty} \mathcal{V}\to \oplus_{n=-\infty}^{\infty} \mathcal{V}$ is the first coordinate  projection onto $\mathcal{V}$.
\end{theorem}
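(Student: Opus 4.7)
The plan is to mirror the strategy of Theorem~\ref{NDILATIONVECTOR}: exhibit an explicit two-sided inverse $V$ of $U$ and then verify the dilation identity by computing the powers $U^n$ applied to the embedded copy of $\mathcal{V}$ directly and reading off the $0$-th coordinate.

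First I would reinterpret $U$ via its action on a finitely supported vector $(x_n)_{n\in\mathbb{Z}}$: the definition gives $(Ux)_n = x_{n+1}$ for every $n\neq 0$ and $(Ux)_0 = Tx_0 + x_1$. Thus $U$ is a left shift of coordinates, twisted at position $0$ by an application of $T$. Solving the equation $y = Ux$ coordinate by coordinate forces $x_m = y_{m-1}$ for every $m\neq 1$ and $x_1 = y_0 - Ty_{-1}$. I would therefore define $V=(v_{m,n})$ by $v_{m,m-1}=I$ for every $m\neq 1$, together with $v_{1,0}=I$ and $v_{1,-1}=-T$, all other entries being zero, and then verify $UV=VU=I$ by a coordinate-wise check; the only nontrivial case comes from row/column $0$ and uses precisely the two nonzero entries of $U$ in that row.

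For the dilation identity, write $x\delta_k$ for the element of $\oplus_{n=-\infty}^{\infty}\mathcal{V}$ whose only nonzero coordinate is $x$ in position $k$. The key elementary computations are $U(x\delta_0)=Tx\,\delta_0+x\,\delta_{-1}$ (from the two nonzero entries in column $0$ of $U$) and $U(x\delta_{-k})=x\,\delta_{-k-1}$ for every $k\geq 1$ (since column $-k$ of $U$ has the single nonzero entry $u_{-k-1,-k}=I$). A straightforward induction on $n$ then gives
\begin{equation*}
U^n(x\delta_0) \;=\; \sum_{k=0}^{n} T^{\,n-k}x\,\delta_{-k},
\end{equation*}
whose $0$-th coordinate is precisely $T^n x$; this is \eqref{INFINITEDILATIONEQUATION}.

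The only real obstacle is bookkeeping: one has to be careful about indexing conventions -- which coordinate is the ``first'' one, in which direction $U$ shifts, and which row and column of $U$ carry the entry $T$. Because every vector in $\oplus_{n=-\infty}^{\infty}\mathcal{V}$ has finite support and every row and column of both $U$ and $V$ has only finitely many nonzero entries, the matrix products in question are genuine finite sums; no convergence issues intrude, and both halves of the statement reduce to routine coordinate-wise verifications.
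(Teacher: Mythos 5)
Your proposal is correct and follows essentially the same route as the paper: you exhibit exactly the same inverse $V$ (with $v_{n,n-1}=I$ for all $n$ and the single correction term $v_{1,-1}=-T$) and verify the dilation equation by computing powers of $U$ on the embedded copy of $\mathcal{V}$. Your version is in fact slightly more detailed than the paper's, since you derive $V$ by solving $y=Ux$ coordinatewise and make the induction giving $U^n(x\delta_0)=\sum_{k=0}^{n}T^{n-k}x\,\delta_{-k}$ explicit, where the paper simply asserts $UV=VU=I$ and leaves the power computation to the reader.
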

\begin{proof}
We  get Equation (\ref{INFINITEDILATIONEQUATION}) by calculation of powers of $U$. The matrix   $V\coloneqq(v_{n,m})_{-\infty \leq n,m\leq \infty}$ defined by  
\begin{align*}
v_{0,0}\coloneqq 0, \quad v_{1,-1}\coloneqq -T, \quad v_{n,n-1}\coloneqq I, \quad \forall n \in \mathbb{Z},  \quad v_{n,m}\coloneqq 0 \quad  \text{ otherwise},
\end{align*}
i.e, 
\begin{align*}
U=\begin{pmatrix}
&\vdots &\vdots & \vdots & \vdots & \vdots & \\
\cdots & I & 0& 0 & 0&  0& \cdots & \\
\cdots & 0 & I& \underline{0} & 0& 0&\cdots  & \\
\cdots & 0&-T&I&0& 0&\cdots&\\
\cdots & 0&0&0&I& 0&\cdots &\\
\cdots & 0&0&0&0& I&\cdots &\\
& \vdots &\vdots &\vdots &\vdots  & \vdots & \\
\end{pmatrix}_{\infty\times \infty}
\end{align*}
where $0$ is in the $(0.0)$  position (which is underlined), satisfies $UV=VU=I$ and hence $U$ is invertible which completes the proof.
\end{proof}
Note that the Equation (\ref{FINITEDILATIONEQUATION}) holds only upto $N$ and not for $N+1$ and higher natural numbers. An important observation associated with Theorems  \ref{NDILATIONVECTOR} and \ref{SCHAFFERVECTOR} is that the dilation is not optimal, i.e., even if the given operator is invertible, then also $U$ is not same as $T$. To overcome this, next we move on with the definition of dilation given by Bhat, De, and Rakshit \cite{BHATDERAKSHITH}.
\begin{definition}\cite{BHATDERAKSHITH}
Let $\mathcal{V}$ be a vector space  and 	$T: \mathcal{V} \to \mathcal{V}$ be a linear map. A linear injective dilation of $T$ is a quadruple $(\mathcal{W}, I, U,P)$, where $\mathcal{W}$ is a vector space,   and 	$I: \mathcal{V} \to \mathcal{W}$ is an  injective  linear map, $U: \mathcal{W} \to \mathcal{W}$ is an injective  linear map, $P: \mathcal{W} \to \mathcal{W}$ is an idempotent linear map such that $P(\mathcal{W})=I(\mathcal{W})
$ and 
\begin{align*}
\text{(Dilation equation)} \quad IT^nx=PU^nIx, \quad \forall n\in \mathbb{Z}_+,  \forall x \in  \mathcal{V}.
\end{align*}
A dilation $(\mathcal{W}, I, U,P)$ of $T$ is said to be minimal if 
\begin{align*}
 \mathcal{W}=\operatorname{span}\{U^nIx:  n\in \mathbb{Z}_+,   x \in  \mathcal{V}\}.
\end{align*}
\end{definition}
An easier way to remember the dilation equation is the following commutative diagram. 
\begin{center}
	\[
	\begin{tikzcd}
\mathcal{W}	 \arrow[r,"U^n"]&\mathcal{W}  \arrow[r,"P"]& \mathcal{W}\\
	&\mathcal{V} \arrow[ul,"I"] \arrow[r,"T^n"] & \mathcal{V}\arrow[u,"I"]
	\end{tikzcd}
	\]
\end{center}
In \cite{BHATDERAKSHITH} vector space analogous of Sz. Nagy dilation result was proved. 
\begin{theorem}\cite{BHATDERAKSHITH}\label{STANDARDDILATION}
	Every linear map $T: \mathcal{V} \to \mathcal{V}$ admits  minimal injective linear dilation.
\end{theorem}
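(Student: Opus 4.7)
The plan is to give a one-sided Schäffer-style construction. Since the definition only requires $U$ to be injective (not invertible) and the dilation equation only involves non-negative powers of $T$, a forward-shift tail appended to $\mathcal{V}$ should suffice, and it can be arranged to be minimal from the outset. Concretely, I would set $\mathcal{W}\coloneqq \bigoplus_{n=0}^{\infty}\mathcal{V}_n$ (algebraic direct sum, only finitely many coordinates nonzero), where each $\mathcal{V}_n$ is a copy of $\mathcal{V}$, and define
\begin{align*}
Ix \coloneqq (x,0,0,\dots), \qquad P(v_0,v_1,v_2,\dots) \coloneqq (v_0,0,0,\dots),
\end{align*}
\begin{align*}
U(v_0,v_1,v_2,\dots) \coloneqq (Tv_0,\, v_0,\, v_1,\, v_2,\dots).
\end{align*}

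Injectivity of $I$, idempotency of $P$, and the identity $P(\mathcal{W})=I(\mathcal{V})$ are immediate from the definitions. The operator $U$ is injective because if $U(v)=0$, then the $(n{+}1)$-th coordinate of $U(v)$, namely $v_n$, vanishes for every $n\geq 0$. A straightforward induction on $n$ then yields
\begin{align*}
U^n Ix = (T^n x,\, T^{n-1}x,\, \dots,\, Tx,\, x,\, 0,\, 0,\dots),
\end{align*}
from which $PU^n Ix = IT^n x$ is immediate, establishing the dilation equation.

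For minimality I would use the telescoping identity $U^n Ix - U^{n-1}I(Tx)$, which equals the vector with $x$ in coordinate $n$ and zeros elsewhere. Combined with the base case $Ix = (x, 0, 0, \dots)$, this shows that every ``coordinate copy'' of $\mathcal{V}$ inside $\mathcal{W}$ lies in $\operatorname{span}\{U^n Ix : n\in \mathbb{Z}_+,\, x\in \mathcal{V}\}$; since these coordinate copies span $\mathcal{W}$, the dilation is minimal.

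There is no serious obstacle in the vector-space setting: without the isometry requirement of the classical Hilbert space situation, no defect operators or square roots of $I-T^*T$ are needed, and the naïve forward shift already does the job. One could alternatively extract a minimal dilation from the Schäffer-type invertible operator of Theorem \ref{SCHAFFERVECTOR} by taking $\mathcal{W}':=\operatorname{span}\{U^n Ix\}$ inside $\oplus_{n=-\infty}^\infty \mathcal{V}$ and restricting, but this route requires one to then verify that $U$ preserves $\mathcal{W}'$ and remains injective on it, whereas the direct construction above builds minimality in from the start. The genuinely new phenomenon to flag (not an obstruction to the proof) is loss of uniqueness: different shift-like completions of the ``future'' tail produce inequivalent minimal dilations, consistent with the non-uniqueness already observed for the vector-space Wold and Halmos constructions earlier in the paper.
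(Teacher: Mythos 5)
Your proof is correct, and it takes a genuinely different route from the paper's, even though both constructions live on the same space $\mathcal{W}$ of finitely supported sequences with the same embedding $I$. The paper takes $U$ to be the bare forward shift $(x_0,x_1,\dots)\mapsto(0,x_0,x_1,\dots)$ and absorbs all of the action of $T$ into the idempotent, setting $P(x_0,x_1,\dots)=\sum_{n\geq 0}IT^nx_n$; with that choice $U^nIx$ is exactly the vector with $x$ in slot $n$, so the dilation equation and minimality are immediate by inspection. You instead put $T$ into the dilation operator, $U(v_0,v_1,\dots)=(Tv_0,v_0,v_1,\dots)$, in the spirit of a one-sided Sch\"affer matrix, and use the plain first-coordinate projection for $P$. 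The price is that minimality is no longer visible at a glance and needs your telescoping identity $U^nIx-U^{n-1}I(Tx)=(0,\dots,0,x,0,\dots)$, which you state and which is correct; the benefit is that your $P$ is the ``obvious'' coordinate projection, so the quadruple looks like the classical compression picture and, as you note, is just the restriction of the two-sided operator of Theorem \ref{SCHAFFERVECTOR} to the non-negative coordinates. The two dilations are in fact isomorphic: the unipotent bijection $\Phi(x_n)_{n\geq 0}\coloneqq\bigl(\sum_{m\geq k}T^{m-k}x_m\bigr)_{k\geq 0}$ satisfies $\Phi I=I$, $\Phi U_{\mathrm{paper}}=U_{\mathrm{yours}}\Phi$ and $\Phi P_{\mathrm{paper}}=P_{\mathrm{yours}}\Phi$, so your closing remark about inequivalent minimal dilations does not apply to this particular pair, though the non-uniqueness point is reasonable in general.
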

\begin{proof}
	We reproduce the proof given by Bhat, De, and Rakshit \cite{BHATDERAKSHITH} for the sake of future use. Define 
	\begin{align*}
	\mathcal{W}\coloneqq\{(x_n)_{n=0}^\infty :x_n \in \mathcal{V}, \forall n \in  \mathbb{Z}_+, x_n\neq 0 \text{ only for finitely many } n's\}.
	\end{align*} 
	Clearly $\mathcal{W}$ is a vector space w.r.t. natural operations. Now define 
	\begin{align*}
&	I:\mathcal{V} \ni x \mapsto (x, 0, \dots ) \in  \mathcal{W},\\
&	U: \mathcal{W} \ni (x_n)_{n=0}^\infty \mapsto (0, x_0, \dots) \in \mathcal{W},\\
&	P:\mathcal{W} \ni (x_n)_{n=0}^\infty \mapsto \sum_{n=0}^{\infty}IT^nx_n\in \mathcal{W}.
	\end{align*}
	Then $(\mathcal{W}, I, U,P)$ is a minimal injective linear dilation of $T$.
\end{proof}
We call the dilation $(\mathcal{W}, I, U,P)$ constructed in Theorem \ref{STANDARDDILATION} as the standard dilation of $T$. We next consider inter-twining lifting theorem. For contractions acting on Hilbert spaces this says that any operator which intertwins contractions can be lifted so that the lifted operator intertwins dilation operator.
\begin{theorem}\cite{NAGYLIFTING} (Inter-twining lifting theorem) Let $T_1:\mathcal{H}_1\to \mathcal{H}_1$, $T_2:\mathcal{H}_2\to \mathcal{H}_2$  be contractions, where $\mathcal{H}_1$, $\mathcal{H}_2$ are Hilbert spaces. Let $V_1:\mathcal{K}_1\to \mathcal{K}_1$, $V_2:\mathcal{K}_2\to \mathcal{K}_2$ be minimal isometric dilations of $T_1,T_2$, respectively. Assume that $S:\mathcal{H}_2\to \mathcal{H}_1$ is a bounded linear operator such that $T_1S=ST_2$. Then there exists a bounded linear operator $R:\mathcal{K}_2\to \mathcal{K}_1$ such that $V_1R=RV_2$, $P_{\mathcal{H}_1}R_{\mathcal{H}_2^\perp}=0$, $P_{\mathcal{H}_1}R_{\mathcal{H}_2}=S$ and $\|R\|=\|S\|$. Conversely if $R:\mathcal{K}_2\to \mathcal{K}_1$ is a bounded linear operator such that $V_1R=RV_2$ and $P_{\mathcal{H}_1}R_{\mathcal{H}_2^\perp}=0$, then $S\coloneqq P_{\mathcal{H}_1}R_{\mathcal{H}_2}$ satisfies $T_1S=ST_2$. 
\end{theorem}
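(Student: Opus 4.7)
The plan is to construct $R$ by prescribing it on a dense subspace of $\mathcal{K}_2$ supplied by minimality, then extending by continuity. Minimality gives that $\mathcal{D}:=\mathrm{span}\{V_2^n h : h\in\mathcal{H}_2,\ n\in\mathbb{Z}_+\}$ is dense in $\mathcal{K}_2$, and the natural candidate is
\begin{align*}
R_0\Bigl(\sum_{n=0}^N V_2^n h_n\Bigr) := \sum_{n=0}^N V_1^n S h_n.
\end{align*}
Once $R_0$ is shown to be well defined and to satisfy $\|R_0\xi\|\le\|S\|\,\|\xi\|$ on $\mathcal{D}$, the remainder is bookkeeping: extend to $R:\mathcal{K}_2\to\mathcal{K}_1$ by continuity, verify the three stated identities, and note that $\|R\|\ge\|S\|$ follows automatically from $S = P_{\mathcal{H}_1}R|_{\mathcal{H}_2}$.

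The three identities are easy on generators and extend by continuity. The intertwining reads $V_1 R_0(V_2^n h) = V_1^{n+1} S h = R_0 V_2(V_2^n h)$. The compression $P_{\mathcal{H}_1}R|_{\mathcal{H}_2}=S$ is the $n=0$ case, since $R h = S h \in\mathcal{H}_1$. For $P_{\mathcal{H}_1}R|_{\mathcal{H}_2^\perp}=0$, I would write any element of $\mathcal{D}\cap\mathcal{H}_2^\perp$ as a combination of $V_2^n h - T_2^n h$ with $n\ge 1$ and $h\in\mathcal{H}_2$, and combine the dilation identity $P_{\mathcal{H}_1}V_1^n|_{\mathcal{H}_1}=T_1^n$ with the iterated intertwining $T_1^n S = S T_2^n$ (from $T_1 S = ST_2$ by induction) to get $P_{\mathcal{H}_1}R_0(V_2^n h - T_2^n h) = S T_2^n h - S T_2^n h = 0$.

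The technical heart is the estimate $\|R_0\xi\|\le\|S\|\,\|\xi\|$, which also supplies well-definedness. Using that $V_1,V_2$ are isometries together with the dilation and intertwining identities, a direct expansion reduces the inner products, for $i\le j$, to
\begin{align*}
\langle V_1^i Sh_i, V_1^j Sh_j\rangle = \langle S^*Sh_i, T_2^{j-i}h_j\rangle, \qquad \langle V_2^i h_i, V_2^j h_j\rangle = \langle h_i, T_2^{j-i}h_j\rangle,
\end{align*}
so the desired inequality becomes positivity of the sesquilinear form obtained by inserting the positive operator $\|S\|^2 I - S^*S$ into a Toeplitz-type kernel built from the shifts $T_2^{j-i}$. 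This is precisely the content of the commutant lifting theorem and is the main obstacle; I would attack it by realizing $V_2$ in the Sch\"affer form on $\mathcal{H}_2\oplus\bigl(\ell^2(\mathbb{Z}_+)\otimes\overline{\mathrm{Range}}\,D_{T_2}\bigr)$ with defect operator $D_{T_2}=(I-T_2^*T_2)^{1/2}$, writing $R$ block-wise, and reading the positivity off as a sum of squares.

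The converse is routine. For $S:=P_{\mathcal{H}_1}R|_{\mathcal{H}_2}$ and any $h\in\mathcal{H}_2$, decompose $Rh=P_{\mathcal{H}_1}Rh + P_{\mathcal{H}_1^\perp}Rh$; since $V_1$ is the minimal isometric dilation of $T_1$, one has $V_1\mathcal{H}_1^\perp\subseteq\mathcal{H}_1^\perp$, whence $P_{\mathcal{H}_1}V_1 Rh = P_{\mathcal{H}_1}V_1 P_{\mathcal{H}_1}Rh = T_1 Sh$. On the other hand, decomposing $V_2 h = T_2 h + (V_2 h - T_2 h)$ with the second summand in $\mathcal{H}_2^\perp$ and invoking $P_{\mathcal{H}_1}R|_{\mathcal{H}_2^\perp}=0$ gives $P_{\mathcal{H}_1}R V_2 h = S T_2 h$, so the intertwining $V_1 R = R V_2$ delivers $T_1 S = S T_2$.
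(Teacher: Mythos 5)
The paper offers no proof of this statement---it is quoted from \cite{NAGYLIFTING} as classical background for the vector-space analogue---so your attempt can only be judged on its own terms. Your converse paragraph is correct, but the forward direction rests on a candidate that does not exist. Prescribing $R_0\bigl(\sum_n V_2^n h_n\bigr)=\sum_n V_1^n Sh_n$ forces $R|_{\mathcal{H}_2}=S$, i.e.\ $R\mathcal{H}_2\subseteq\mathcal{H}_1$, which is strictly stronger than the required $P_{\mathcal{H}_1}R|_{\mathcal{H}_2}=S$, and in general no such map exists because the prescription is already inconsistent on $\mathcal{D}$. Concretely, take $\mathcal{H}_1=\mathcal{H}_2=\mathbb{C}^2$ and $T_1=T_2=S=T$ with $Tf_1=0$, $Tf_2=f_1$, so $T_1S=ST_2$ trivially and $\|S\|=1$. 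In the Sch\"affer minimal isometric dilation, $D_Tf_2=0$, hence $V_2f_2=Tf_2=f_1$ under the embedding $\mathcal{H}_2\hookrightarrow\mathcal{K}_2$, so $f_1-V_2f_2=0$; but $Sf_1-V_1Sf_2=-V_1f_1=-(0\oplus D_Tf_1)\neq 0$. Thus $R_0$ is ill-defined, and equivalently the ``Toeplitz-type positivity'' you defer to the technical heart is simply false: with $h_0=-f_1$, $h_1=f_2$ the form $\sum_{i,j}\langle(\|S\|^2I-S^*S)h_i,T_2^{\,j-i}h_j\rangle$ evaluates to $1+0-2=-1$.

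This failure is exactly why commutant lifting is a theorem rather than a computation: the lifted $R$ must in general send $\mathcal{H}_2$ partly into the defect space $\mathcal{K}_1\ominus\mathcal{H}_1$, and the standard proofs construct it by a chain of one-step extensions resolved with Parrott's completion lemma (or by Sarason's argument), not by a norm estimate on the naive intertwiner. Your reductions $\langle V_1^iSh_i,V_1^jSh_j\rangle=\langle S^*Sh_i,T_2^{\,j-i}h_j\rangle$ and $\langle V_2^ih_i,V_2^jh_j\rangle=\langle h_i,T_2^{\,j-i}h_j\rangle$ are correct as far as they go; the gap is that the object they are meant to bound does not exist, so the whole ``define on generators, estimate, extend by continuity'' scaffolding has to be replaced, not merely completed.
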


\begin{theorem}(Inter-twining lifting theorem for vector spaces) Let $\mathcal{V}_1$, $\mathcal{V}_2$ be vector spaces,   $T_1: \mathcal{V}_1 \to \mathcal{V}_1$, $T_2: \mathcal{V}_2 \to \mathcal{V}_2$ be linear maps. Let $(\mathcal{W}_1, I_1, U_1,P_1)$, $(\mathcal{W}_2, I_2, U_2,P_2)$ be standard dilations of   $T_1$, $T_2$, respectively. If  $S: \mathcal{V}_2 \to \mathcal{V}_1$ is a linear map such that $T_1S=ST_2$, then there exists a linear map  $R: \mathcal{W}_2 \to \mathcal{W}_1$ such that 
	\begin{align}\label{INTERIN}
	U_1R=RU_2, \quad RP_2=P_1R, \quad RI_2=I_1S.
	\end{align}
	Conversely if $R: \mathcal{W}_2 \to \mathcal{W}_1$ is a linear map such that $U_1R=RU_2,  RP_2=P_1R$, then there exists a linear map   $S: \mathcal{V}_2 \to \mathcal{V}_1$ such that 
	\begin{align}\label{INTERSECOND}
	RI_2=I_1S, \quad T_1S=ST_2.
	\end{align}
\end{theorem}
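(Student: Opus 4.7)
The plan is to exploit the explicit structure of the standard dilation recalled in Theorem \ref{STANDARDDILATION}. For the forward implication the natural guess is that $R$ should act coordinatewise, that is, $R((x_n)_{n=0}^\infty)\coloneqq (Sx_n)_{n=0}^\infty$. For the converse, $S$ should be obtained by restricting $R$ to $I_2(\mathcal{V}_2)$ and composing with the inverse of $I_1$ on its range. The facts that will be used repeatedly are: each $U_i$ is the right shift, each $P_i$ collapses a sequence $(x_n)$ to $(\sum_n T_i^n x_n, 0, 0, \ldots)$, and $P_i I_i = I_i$ (since $P_i I_i(x)=I_i T_i^0 x = I_i x$).

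For the forward direction, linearity of $R$ is clear, and $RI_2 = I_1 S$ is immediate from the formula for $I_2$. The identity $U_1 R = R U_2$ also follows at once from the definitions of the shifts because $S$ is applied componentwise. The identity $RP_2 = P_1 R$ is the one that genuinely uses $T_1 S = S T_2$: a routine induction upgrades this hypothesis to $T_1^n S = S T_2^n$ for every $n\ge 0$, after which both $P_1 R((x_n))$ and $R P_2((x_n))$ reduce to the sequence whose only nonzero entry is the first coordinate $\sum_n S T_2^n x_n$.

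For the converse, the key preliminary observation is that $R I_2 x$ always lies in $I_1(\mathcal{V}_1)$. Indeed, $R I_2 = R P_2 I_2 = P_1 R I_2$ using $P_2 I_2 = I_2$ and the hypothesis $R P_2 = P_1 R$, so $R I_2 x \in P_1(\mathcal{W}_1) = I_1(\mathcal{V}_1)$. Injectivity of $I_1$ then defines $Sx$ uniquely by $I_1 Sx \coloneqq R I_2 x$, and linearity of $S$ is inherited from $R, I_1, I_2$. For the intertwining relation, the dilation equations together with the hypotheses give the chain $I_1 T_1 S = P_1 U_1 I_1 S = P_1 U_1 R I_2 = P_1 R U_2 I_2 = R P_2 U_2 I_2 = R I_2 T_2 = I_1 S T_2$, and injectivity of $I_1$ yields $T_1 S = S T_2$. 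The only step requiring a moment of thought is the preliminary identity $P_2 I_2 = I_2$ that opens the converse; beyond that, the argument is a diagram chase and no serious obstacle is expected.
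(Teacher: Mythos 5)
Your proposal is correct and follows essentially the same route as the paper: the coordinatewise map $R((x_n))=(Sx_n)$ in the forward direction, and in the converse the definition of $S$ via $I_1Sx\coloneqq RI_2x$ (justified by $RI_2=RP_2I_2=P_1RI_2\in I_1(\mathcal{V}_1)$, which is exactly the paper's observation that $RP_2(y,0,\dots)=P_1R(y,0,\dots)$ lands in the range of $I_1$), followed by the same chain through $P_1U_1RI_2=RP_2U_2I_2=RI_2T_2$ and injectivity of $I_1$. No gaps.
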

\begin{proof}
	Define $R:\mathcal{W}_2 \ni (x_n)_{n=0}^\infty \mapsto (Sx_n)_{n=0}^\infty \in \mathcal{W}_1 $. We now verify three equalities in Equation (\ref{INTERIN}). Let $ (x_n)_{n=0}^\infty \in \mathcal{W}_2$. Then 
	\begin{align*}
	&	U_1R(x_n)_{n=0}^\infty=	U_1(Sx_n)_{n=0}^\infty=(0, S x_0, Sx_1, \dots), \\ 
	&RU_2(x_n)_{n=0}^\infty=R(0, x_0, x_1,\dots)=(0, S x_0, Sx_1, \dots), 
		\end{align*}
	\begin{align*}	
	RP_2(x_n)_{n=0}^\infty&=R\left(\sum_{n=0}^{\infty}I_2T_2^nx_n\right)=\sum_{n=0}^{\infty}RI_2T_2^nx_n\\
	&=\sum_{n=0}^{\infty}R(T_2^nx_n, 0, 0, \dots)=\sum_{n=0}^{\infty}(ST_2^nx_n, 0, 0, \dots), 
	\end{align*}
	\begin{align*}
	 P_1R(x_n)_{n=0}^\infty&= P_1(Sx_n)_{n=0}^\infty=\sum_{n=0}^{\infty}I_1T_1^nSx_n\\
	&=\sum_{n=0}^{\infty}I_1ST_2^nx_n=\sum_{n=0}^{\infty}(ST_2^nx_n, 0, 0, \dots), 
		\end{align*}
	\begin{align*}
	& RI_2x=R(x, 0, 0, \dots)=(Sx, 0, 0, \dots), \quad I_1Sx=(Sx, 0, 0, \dots).
	\end{align*}
	We now consider the converse part. For this, first we have to define linear map $S$. Let $y \in \mathcal{V}_2$. Now $RP_2(y, 0, \dots)=P_1R(y, 0, \dots)\in I_1(\mathcal{V}_1)$ and  $I_1$ is injective implies that there exists a unique $x \in \mathcal{V}_2$ such that $RP_2(y, 0, \dots)=P_1R(y, 0, \dots)=I_1(x)$. We now define $Sy\coloneqq x.$ Then $S$ is well-defined and linear. Let $y \in \mathcal{V}_2$ and $x \in \mathcal{V}_2$ be such that $Sy=x$. Then $I_1Sy=RP_2(y, 0, \dots)=RI_2y$. Thus we verified first equality in  (\ref{INTERSECOND}). We are left with verification of second equality. We now calculate 
	\begin{align}\label{CONVERSE-1}
	RP_2U_2(x, 0, \dots)=RP_2(0, x, 0, \dots)=RI_2T_2x
	\end{align}
	and 
	\begin{align}\label{CONVERSEZERO}
	P_1U_1R(x, 0, \dots)&=P_1RU_2(x, 0, \dots)=P_1R(0,x,0 \dots)\\
	&=RP_2(0,x,0 \dots)=RI_2T_2x, \quad \forall x \in \mathcal{V}_2.
	\end{align}
	Given conditions produce
	\begin{align}\label{CONVERSE}
	RP_2U_2=P_1RU_2=P_1U_1R
	\end{align}
	Equation (\ref{CONVERSE}) says that (\ref{CONVERSE-1}) and (\ref{CONVERSEZERO}) are equal which completes the proof.
\end{proof}
Sz. Nagy's dilation theorem brings us to the question of dilating more than one operators which are commuting. After a decade of work of Sz. Nagy, Ando derived the following result.
\begin{theorem}\cite{ANDO} (Ando dilation)
Let $\mathcal{H}$ be a Hilbert space and $T_1, T_2:\mathcal{H}\to \mathcal{H}$ be commuting contractions.	Then there exists a Hilbert space $\mathcal{K}$	which contains $\mathcal{H}$ isometrically and a pair of commuting unitaries   $U_1, U_2:\mathcal{K}\to \mathcal{K}$ such that 
\begin{align*}
T_1^nT_2^m=P_\mathcal{H}U_1^n{U_2}_\mathcal{H}^m, \quad \forall n,m=1, 2, \dots, 
\end{align*}
where $P_\mathcal{H}:\mathcal{K}\to \mathcal{K}$  is the orthogonal projection onto $\mathcal{H}$.
\end{theorem}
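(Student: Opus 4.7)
The plan is to follow Ando's original two-stage strategy: first dilate the commuting contractions $(T_1, T_2)$ to a pair of commuting isometries $(V_1, V_2)$ on a larger Hilbert space $\mathcal{K}_0 \supseteq \mathcal{H}$, and then dilate the commuting isometric pair to commuting unitaries $(U_1, U_2)$ on a still larger Hilbert space $\mathcal{K} \supseteq \mathcal{K}_0$. The compressibility identity $T_1^n T_2^m = P_\mathcal{H} U_1^n U_2^m|_\mathcal{H}$ will then follow by a routine telescoping argument, using that $\mathcal{H}$ is semi-invariant for each $U_i$ and that each $V_i$ is itself an isometric dilation of $T_i$.

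For the first stage, introduce the defect operators $D_i := (I - T_i^* T_i)^{1/2}$ with closed ranges $\mathcal{D}_i := \overline{D_i \mathcal{H}}$ for $i = 1, 2$, and build a Schäffer-type dilation space $\mathcal{K}_0 \supseteq \mathcal{H}$ rich enough to accommodate both defects. Extend $T_1$ to an isometry $V_1$ in the standard Schäffer fashion (pushing the $D_1$-defect into a fresh $\mathcal{D}_1$-summand at each step), and then construct $V_2$ extending $T_2$ so that $V_1 V_2 = V_2 V_1$. The coupling between the two extensions is obtained by exhibiting a unitary $\Phi$ on a suitable amalgam of $\mathcal{D}_1$ and $\mathcal{D}_2$ that encodes how $T_2$ should act on the $\mathcal{D}_1$-defects of $T_1$; its existence rests on the identity $D_1^2 T_2 = T_2 D_1^2$ on $\mathcal{H}$, which is a direct consequence of $T_1 T_2 = T_2 T_1$.

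For the second stage, use a bilateral shift construction: set $\mathcal{K} := \bigoplus_{n \in \mathbb{Z}} \mathcal{K}_0$, and extend $V_1, V_2$ to commuting unitaries $U_1, U_2$ on $\mathcal{K}$. Concretely, $U_1$ acts as a $V_1$-twisted bilateral shift in one direction, and $U_2$ acts block-diagonally via $V_2$. Because $V_1 V_2 = V_2 V_1$, the commutation $U_1 U_2 = U_2 U_1$ is inherited immediately; functoriality of the bilateral-shift dilation with respect to intertwining operators handles the rest.

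The main obstacle is clearly the first stage. Dilating each $T_i$ separately to an isometry is the standard Schäffer procedure, but forcing the two dilations to commute demands a genuinely coupled construction, hinging on the delicate step of producing the unitary $\Phi$ described above. It is precisely here that the argument fails to generalize to three or more commuting contractions --- Parrott's well-known counterexample obstructs the corresponding unitary on a triple defect space --- so the true crux of Ando's theorem is that commutativity suffices for exactly \emph{two} contractions to admit a simultaneous commuting isometric extension.
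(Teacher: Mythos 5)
The paper does not actually prove this theorem: it is quoted verbatim from Ando's 1963 paper as classical background, and the authors' own contribution is only the vector-space variant proved at the end of the section. So there is no in-paper argument to compare yours against, and your proposal has to be judged on its own terms. Its architecture is indeed that of Ando's original proof (first a commuting isometric dilation, then a commuting unitary extension), but as written it has a genuine gap at exactly the point you identify as the crux. The existence of the coupling unitary $\Phi$ does \emph{not} rest on the operator identity $D_1^2 T_2 = T_2 D_1^2$: since $D_1^2 = I - T_1^*T_1$, that identity would force $T_1^*T_1$ to commute with $T_2$, which does not follow from $T_1T_2 = T_2T_1$ (adjoints intervene) and is false in general. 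What actually powers Ando's construction is the norm identity $\|D_1 T_2 x\|^2 + \|D_2 x\|^2 = \|x\|^2 - \|T_1T_2x\|^2 = \|x\|^2 - \|T_2T_1x\|^2 = \|D_2 T_1 x\|^2 + \|D_1 x\|^2$, which defines an isometry taking $(D_1T_2x, 0, D_2x, 0)$ to $(D_2T_1x, 0, D_1x, 0)$ on the relevant closed subspaces of $\mathcal{H}^4$ and extends to a unitary of $\mathcal{H}^4$ precisely because the two spare zero coordinates leave room for the extension. Without this identity, stage one remains a plan rather than a proof.

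Stage two is also not correct as described: if $U_2$ acts ``block-diagonally via $V_2$'' on $\bigoplus_{n\in\mathbb{Z}}\mathcal{K}_0$, it is a direct sum of copies of the isometry $V_2$ and hence is not unitary unless $V_2$ already was. The passage from a commuting pair of isometries to a commuting pair of unitary extensions is itself a genuine (true but nontrivial) theorem, due to It\^{o}, usually proved by taking the minimal unitary extension of $V_1$ and verifying that $V_2$ lifts to an isometry commuting with it, then repeating; it needs to be invoked or proved rather than absorbed into an appeal to functoriality. Your closing remark about the obstruction for three or more contractions is accurate in spirit, though the standard counterexamples (Parrott, Varopoulos) obstruct the dilation itself, not merely one particular construction of $\Phi$.
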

It is known that Ando dilation theorem can not be extended for more than two commuting contractions \cite{BHATTACHARYYA}. However, it is a surprising result obtained by Bhat, De, and Rakshit \cite{BHATDERAKSHITH} that for set theoretic consideration, Ando dilation holds for arbitary number of functions.
We don't know Ando dilation for linear maps on vector spaces but have a variant of it which is given in the following theorem.
\begin{theorem}
Let $\mathcal{V}$ be a vector space  and 	$T, S: \mathcal{V} \to \mathcal{V}$ be commuting  linear maps.	Then there are dilations $(\mathcal{W}, I, U_1,P)$ and $(\mathcal{W}, I, U_2,P)$ of $T,S$ respectively, such that 
\begin{align*}
\begin{pmatrix}
0_c& U
\end{pmatrix}
=\begin{pmatrix}
0_r\\
V
\end{pmatrix}
\end{align*}
and 
\begin{align*}
 \quad IT^nS^mx=PU^nV^mIx, \quad \forall n,m\in \mathbb{Z}_+,  \forall x \in  \mathcal{V},
\end{align*}
where $0_c$ denotes the infinite column matrix of zero vectors and $0_r$ denotes the infinite row matrix of zero vectors.
\end{theorem}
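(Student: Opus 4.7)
The plan is to perform a two-dimensional lift of the standard dilation construction from Theorem \ref{STANDARDDILATION}. Since $T$ and $S$ commute, the pair $(T, S)$ is a representation of the commutative monoid $\mathbb{Z}_+^2$ on $\mathcal{V}$, so the natural dilation space is a $\mathbb{Z}_+^2$-indexed free sequence space on which two commuting shifts provide simultaneous dilations of $T$ and $S$. Concretely, I would take $\mathcal{W}$ to be the space of finitely supported double sequences $(x_{n,m})_{n,m=0}^\infty$ with $x_{n,m}\in\mathcal{V}$; define $I:\mathcal{V}\to\mathcal{W}$ by placing $x$ at position $(0,0)$; define $U,V:\mathcal{W}\to\mathcal{W}$ as the shifts in the first and second index respectively; and set $P((x_{n,m})) = I(\sum_{n,m} T^n S^m x_{n,m})$. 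The commutativity $TS = ST$ enters at this last step so that the coefficient $T^n S^m$ is unambiguous.

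Once the construction is in place, the combined dilation equation $IT^n S^m x = PU^n V^m Ix$ is immediate: the element $U^n V^m Ix$ is supported exactly at position $(n, m)$ with value $x$, and $P$ returns $I(T^n S^m x)$. The other basic properties follow by inspection: $U$ and $V$ are injective linear maps, $UV = VU$ since shifts in independent coordinates commute, and $P$ is idempotent with image $I(\mathcal{V})$. Specializing $m = 0$ exhibits $(\mathcal{W}, I, U, P)$ as a linear injective dilation of $T$, and $n = 0$ exhibits the analogous dilation $(\mathcal{W}, I, V, P)$ of $S$.

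The remaining task is to verify the matrix identity $\begin{pmatrix} 0_c & U \end{pmatrix} = \begin{pmatrix} 0_r \\ V \end{pmatrix}$. For this I would fix a one-dimensional block enumeration of $\mathcal{W}$ in which $U$ and $V$ both admit infinite block-matrix forms over a common block, then compare entries directly: the $(i, j)$-block of the left side equals the $(i, j-1)$-block of $U$ (the case $j = 0$ being zero), while the $(i, j)$-block of the right side equals the $(i-1, j)$-block of $V$ (the case $i = 0$ being zero). The main obstacle I anticipate is selecting the block decomposition so that this comparison reduces to the commutativity of the two shifts; once a suitable decomposition is fixed, the verification should amount to a routine index-chasing argument.
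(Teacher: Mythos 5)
Your construction is exactly the paper's: the same space of finitely supported doubly indexed sequences, the same $I$, the same two coordinate shifts $U$ and $V$, and the same projection $P$ built from $IT^nS^m$, with the dilation equation verified by locating $U^nV^mIx$ at position $(n,m)$. The paper settles the block identity $\begin{pmatrix} 0_c & U\end{pmatrix}=\begin{pmatrix} 0_r\\ V\end{pmatrix}$ by simply writing out both augmented matrices and observing they coincide, which is the routine index check you anticipate, so your proposal is correct and essentially identical in approach.
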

\begin{proof}
We extend the construction in the proof of Theorem \ref{STANDARDDILATION}.	Define 
	\begin{align*}
	\mathcal{W}\coloneqq \bigg\{
	\begin{pmatrix}
	x_{0,0} & x_{0,1} & x_{0,2}&\cdots \\
	x_{1,0} & x_{1,1} & x_{1,2}&\cdots\\
	x_{2,0} & x_{2,1} & x_{2,2}&\cdots\\
	\vdots &\vdots &\vdots &\ddots 
	\end{pmatrix}_{\infty \times \infty }
	:x_{n,m} \in \mathcal{V}, \forall n,m \in  \mathbb{Z}_+, x_{n,m}\neq 0 \\
	\text{ only for finitely many } (n,m)'s\bigg\}.
	\end{align*}
Then $\mathcal{W}$ becomes a vector space with respect to natural operations.  We now define the following four linear maps:

	\begin{align*}
&	I:	\mathcal{V}\ni x \mapsto 	\begin{pmatrix}
	x & 0 & 0&\cdots \\
	0 & 0 & 0&\cdots\\
	0 & 0 & 0&\cdots\\
	\vdots &\vdots &\vdots &\ddots 
	\end{pmatrix} 
	\in \mathcal{W}\\
&	U: \mathcal{W} \ni \begin{pmatrix}
	x_{0,0} & x_{0,1} & x_{0,2}&\cdots \\
	x_{1,0} & x_{1,1} & x_{1,2}&\cdots\\
	x_{2,0} & x_{2,1} & x_{2,2}&\cdots\\
	\vdots &\vdots &\vdots &\ddots 
	\end{pmatrix} \mapsto \begin{pmatrix}
	0&0&0\\
	x_{0,0} & x_{0,1} & x_{0,2}&\cdots \\
	x_{1,0} & x_{1,1} & x_{1,2}&\cdots\\
\vdots &\vdots &\vdots &\ddots 
	\end{pmatrix} \in \mathcal{W}\\
&	V: \mathcal{W} \ni \begin{pmatrix}
	x_{0,0} & x_{0,1} & x_{0,2}&\cdots \\
	x_{1,0} & x_{1,1} & x_{1,2}&\cdots\\
	x_{2,0} & x_{2,1} & x_{2,2}&\cdots\\
	\vdots &\vdots &\vdots &\ddots 
	\end{pmatrix} \mapsto \begin{pmatrix}
	0&x_{0,0} & x_{0,1} &\cdots \\
0&	x_{1,0} & x_{1,1}&\cdots\\
0& 	x_{2,0} & x_{2,1} &\cdots \\
\vdots &	\vdots &\vdots  &\ddots 
	\end{pmatrix} \in \mathcal{W}\\
&	P :\mathcal{W} \ni \begin{pmatrix} x_{0,0} & x_{0,1} & x_{0,2}&\cdots \\
	x_{1,0} & x_{1,1} & x_{1,2}&\cdots\\
	x_{2,0} & x_{2,1} & x_{2,2}&\cdots\\
	\vdots &\vdots &\vdots &\ddots 
	\end{pmatrix} \mapsto \sum_{m=0}^{\infty}\sum_{n=0}^{\infty}IT^nS^mx_{n,m} \in  \mathcal{W}
	\end{align*}
	We then have 
	\begin{align*}
	\begin{pmatrix}
	0_c& U
	\end{pmatrix}=
	\begin{pmatrix}
0&	0 & 0 & 0&\cdots \\
0&	x_{0,0} & x_{0,1} & x_{0,2}&\cdots \\
0&	x_{1,0} & x_{1,1} & x_{1,2}&\cdots\\
0&	x_{2,0} & x_{2,1} & x_{2,2}&\cdots\\
0&	\vdots &\vdots &\vdots &\ddots 
	\end{pmatrix}
	=\begin{pmatrix}
	0_r\\
	V
	\end{pmatrix}.
	\end{align*}
Now $PU^nIx=IT^nx, $ $ 	PV^nIx=IS^mx$, $\forall x \in \mathcal{V}$, $\forall n,m\in \mathbb{Z}_+$. Hence  $(\mathcal{W}, I, U_1,P)$ and $(\mathcal{W}, I, U_2,P)$ are dilations of $T,S$, respectively. A calculation now shows that $ IT^nS^mx=PU^nV^mIx,  \forall n,m\in \mathbb{Z}_+,  \forall x \in  \mathcal{V}$.
\end{proof}

  \section{Acknowledgements}
First author thanks National Institute of Technology Karnataka (NITK) Surathkal for financial assistance. We  thank Sachin M. Naik for some discussions.  We are grateful to Prof. Orr Moshe Shalit, Technion, Israel for some arguments regarding his survey \cite{ORRGUIDED}. We thank the organizers (Dr. Sayan Chakraborty and Dr. Srijan Sarkar) of  WOTOA (Webinars on Operator Theory and Operator Algebras) for arranging a talk by  Prof. B. V. Rajarama Bhat.
  \bibliographystyle{plain}
 \bibliography{reference.bib}

\end{document}